\definecolor{sepia}{cmyk}{0, 0.83, 1, 0.70}
\newtheorem{theorem}{Theorem}
\newtheorem{corollary}[theorem]{Corollary}
\theoremstyle{definition}
\newtheorem{definition}[theorem]{Definition}
\newtheorem{example}[theorem]{Example}
\newtheorem{remark}[theorem]{Remark}
\numberwithin{equation}{section} 
\numberwithin{theorem}{section}  
\renewenvironment{proof}[1][\proofname]
{\par
	\pushQED{$\blacksquare$} 
	\normalfont\topsep6\p@\@plus6\p@\relax
	\trivlist
	\item[\hskip\labelsep\bfseries#1\@addpunct{.}]
	\ignorespaces}
{\popQED \endtrivlist\@endpefalse}
\DeclareMathOperator*{\interior}{int}
\DeclareMathOperator*{\ri}{ri}
\DeclareMathOperator*{\fix}{Fix}
\DeclareMathOperator*{\id}{Id}
\DeclareMathOperator*{\argmax}{argmax}
\begin{document}
\title{\textbf{Linear Convergence Rates for Extrapolated Fixed Point Algorithms}}
\author{Christian Bargetz\thanks{Department of Mathematics, University of Innsbruck, Technikerstra{\ss}e~13, 6020 Innsbruck, Austria, \protect\\ christian.bargetz@uibk.ac.at}
\and Victor I. Kolobov\thanks{Department of Computer Science, The Technion -- Israel Institute of Technology, 32000 Haifa, Israel, \protect\\ kolobov.victor@gmail.com}
\and Simeon Reich\thanks{Department of Mathematics, The Technion -- Israel Institute of Technology, 32000 Haifa, Israel, \protect\\sreich@tx.technion.ac.il}
\and Rafa\l\ Zalas\thanks{Department of Mathematics, The Technion -- Israel Institute of Technology, 32000 Haifa, Israel, \protect\\rzalas@tx.technion.ac.il}}
\maketitle

\begin{abstract}
We establish linear convergence rates for a certain class of extrapolated fixed point algorithms which are based on dynamic string-averaging methods in a real Hilbert space. This applies, in particular, to the extrapolated simultaneous and cyclic cutter methods. Our analysis covers the cases of both metric and subgradient projections.

\vskip2mm\noindent
\textbf{Keywords:} Extrapolation, linear rate, string averaging.
\vskip1mm\noindent
\textbf{Mathematics Subject Classification (2010):} 46N10, 46N40, 47H09, 47J25, 65F10
\end{abstract}


\section{Introduction}
For a given family of nonempty, closed and convex subsets $C_i$ of a real Hilbert space  $\mathcal H$, $i\in I:=\{1,\ldots,M\}$, the \textit{convex feasibility problem} is to find a point $x\in C:=\bigcap_{i\in I} C_i$. In this paper we assume that $C\neq \emptyset$ and that $C_i=\fix U_i$ for a given cutter operator $U_i \colon \mathcal H \to \mathcal H$, $i\in I$. We recall that $U\colon\mathcal H\to\mathcal H$ is a \textit{cutter}, for example, a metric or a subgradinet projection, if $\fix U\neq\emptyset$ and $\langle x-Ux,z-Ux\rangle \leq 0$ for all $x\in\mathcal H$ and $z\in \fix U$. We consider the \textit{extrapolated string averaging} (ESA) method, which is a particular fixed point algorithmic framework of the form
\begin{equation}\label{intro:xk}
  x^0\in\mathcal{H} \qquad \text{and} \qquad
  x^{k+1} := x^k+\lambda_k\sigma_k(x^k)\left(T_k x^{k}-x^k\right), \quad k=0,1,2,\ldots,
\end{equation}
where $\lambda_k>0$ is a \textit{relaxation parameter}, $\sigma_k(\cdot)\colon\mathcal H \to (0,\infty)$ is an \textit{extrapolation functional} and $T_k\colon\mathcal H\to\mathcal H$ is a \textit{string averaging operator} which depends on a chosen subset of $U_1,\ldots,U_M$, that is,
\begin{equation}\label{intro:Tk}
  T_k:= \sum_{n=1}^{N_k}\omega_n^k Q_n^k,
\end{equation}
where $\omega_n^k \in (0,1]$, $\sum_n\omega_n^k=1$ and $Q_n^k:=\prod_{j\in J_n^k} U_j$ is a product of the operators $U_j$ along a nonempty ordered subset $J_n^k\subseteq I$ to which we refer as a \textit{string}.

The extrapolation $\sigma_k$ is intended to accelerate the convergence of method \eqref{intro:xk} in some instances of the problem. It is usually assumed to have values greater than or equal to one, as is done in this paper, although some authors allow smaller values of $\sigma_k$, which are then bounded away from zero. Without any loss of generality, one can assume that $\sigma_k(x):= 1$ whenever $T_kx=x$. If $\sigma_k(x):= 1$ for every $x\in\mathcal H$, then \eqref{intro:xk} becomes the basic, \textit{non-extrapolated} string averaging method.

The operator $T_k$ above is nothing but a convex combination of products of the operators $U_i$ along chosen strings $J_n^k$. The algorithmic structure of such an operator is presented in Figure \ref{fig:SA}. In the extreme cases, the string averaging operator can be reduced either to a cyclic cutter $T_k:=U_M\ldots U_1$ or to a simultaneous (parallel) cutter $T_k:=\sum_{i\in I_k} \omega_i^k U_i$, where $I_k\subseteq I$. According to \eqref{intro:xk}, we allow the structure of $T_k$ to change dynamically from iteration to iteration, which we explain in more detail in Section \ref{sec:methods}. We mention here only that we allow the block iterative framework, where each block $I_k:=J_1^k\cup\ldots\cup J_{N_k}^k$ may differ from $I$. This, when combined with the parallel structure of the operator $T_k$, provides a lot of flexibility in determining $T_k$ and thus in computing the next iteration.

\begin{figure}[tbp]
\centering
\includegraphics[bb=0 0 137.76 68.03, scale=1] {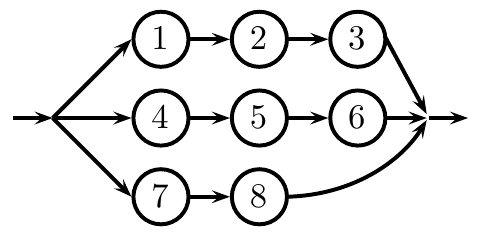}
\caption{Graphic representation of the string averaging operator $T=\frac 1 3 (U_3 U_2 U_1 + U_6 U_5 U_4 + U_8 U_7)$.}
\label{fig:SA}
\end{figure}

The assumption that each $U_i$ is a cutter provides an additional interpretation of the value $U_i x$. Indeed, $U_ix$ is nothing but the projection of $x$ onto the half-space $H_i(x):=\{z\mid \langle x-U_ix, z-U_ix \rangle \leq 0\} \supseteq C_i$, which for $x\notin C_i$ satisfies $x\notin H_i(x)$. Some authors have exploited this idea by projecting onto a certain closed and convex superset $C_i(x^k)\supseteq C_i$; see, for example, \cite{FlamZowe1990}, \cite{BauschkeBorwein1996}, \cite{Combettes1997} and \cite{ZhaoNgLiYao2017}. Although both approaches are theoretically different, the convergence analyses remain similar. The operator based approach, which appeared, for example, in  \cite{BauschkeNollPhan2015}, \cite{BargetzReichZalas2017} and \cite{CegielskiReichZalas2018}, enables us to extract abstract properties of the algorithmic operators. This is of independent interest and will be emphasized in the present paper.

In this paper we focus on the convergence properties and, in particular, on the linear convergence of the scheme \eqref{intro:xk}--\eqref{intro:Tk}. The convergence depends on the one hand on the definition of $\sigma_k$ and, on the other hand, follows from the regularity of the constraints $C_i$ and the operators $U_i$, $i\in I$; see Section \ref{sec:regularities} for the relevant definitions. We recall several known examples of $\sigma_k(\cdot)$ that guarantee weak and, in some cases, norm and even linear convergence. A more detailed overview of extrapolated simultaneous and cyclic cutter methods can be found, for example, in \cite{Cegielski2012}; see also \cite{BauschkeCombettesKruk2006}.

We begin with the simultaneous cutter methods for which
\begin{equation}\label{intro:sigma:SC}
  \sigma_k(x):=\frac{\sum_{i\in I_k}\omega_i^k \|U_i x-x\|^2}{\|\sum_{i\in I_k} \omega_i^k U_i x-x\|^2}\geq 1.
\end{equation}
Pierra considered \eqref{intro:sigma:SC} in \cite{Pierra1984} for the \textit{extrapolated parallel projection method}, where he established weak convergence of the sequence of iterates for $U_i=P_{C_i}$, $I_k=I$ and $\omega_i^k=1/M$. Moreover, under the bounded regularity of the family $\{C_i\mid i\in I\}$ the convergence was shown to be in norm. The \textit{extrapolated parallel subgradient projection} method ($U_i=P_{f_i}$) was introduced by Dos Santos in \cite{DosSantos1987} in $\mathbb R^d$. Since then one can find many extensions in the literature. For example, Combettes \cite{Combettes1996,Combettes1997} proposed the \textit{extrapolated method of parallel approximate projections} (EMPAP), where each $U_i$ was assumed to be the projection onto a closed and convex superset $C_i(x^k)\supseteq C_i$ and $\lambda_k \in [\varepsilon,2-\varepsilon]$ for some $\varepsilon\in (0,1)$. The method was shown to converge weakly under additional regularity of the approximate projections, that is, $\|U_i x^k-x^k\|\geq \delta_i d(x^k,C_i)$. Norm convergence, as in \cite{Pierra1984}, required bounded regularity of the family $\{C_i\mid i\in I\}$. Recently, Zhao et al. \cite{ZhaoNgLiYao2017} have proved that EMPAP converges linearly whenever the family of the sets is assumed to be boundedly linearly regular.

The extrapolated cyclic cutter ($M\geq 2$) appeared for the first time in \cite{CegielskiCensor2012} by Cegielski and Censor, where
\begin{equation}\label{intro:sigma:CC}
  \sigma_k(x):= \frac 1 2 +
    \frac{\sum_{i=1}^{M} \left\|U_i\ldots U_1 x-U_{i-1}\ldots U_1 x \right\|^2}
    {2\left\|U_M\ldots U_1 x-x\right\|^{2}}\geq \frac 1 2 + \frac 1 {2M}
\end{equation}
and $\lambda_k \in [\varepsilon,2-\varepsilon]$ for some $\varepsilon\in (0,1)$. The method was shown to converge weakly whenever each $U_i$ was weakly regular ($U_i-\id$ is demi-closed at 0), $i\in I$. This result was later extended by Nikazad and Mirzapour in \cite{NikazadMirzapour2015} to relaxed weakly regular cutters with a slightly modified $\sigma_k$. Extensive numerical tests for the extrapolated cyclic subgradient projection can be found in \cite{CegielskiNimana2018}.

The extrapolation formula for the general string averaging operator defined by \eqref{intro:Tk} can use both \eqref{intro:sigma:SC} and \eqref{intro:sigma:CC}. As far as we know, a natural extension based on \eqref{intro:sigma:SC} was for the first time proposed by Crombez \cite{Crombez2002}, where

\begin{equation}\label{intro:sigma:SA1}
  \sigma_k(x):=\frac{\sum_{n=1}^{N_k}\omega_n^k \|Q_n^k x-x\|^2}{\|\sum_{n=1}^{N_k}\omega_n^kQ_n^k x-x\|^2}\geq 1.
\end{equation}
Its convergence was investigated in the Euclidean space setting with continuous paracontractions $U_i$. Weak convergence in the Hilbert space setting involving $\alpha_i$-averaged operators $U_i$, $\alpha_i \in (0,1)$, was discussed in \cite{AleynerReich2008}, where a sufficient condition for norm convergence was also presented ($\interior C\neq \emptyset$ which implies bounded linear regularity). In particular, in the case of $(1/2)$-averaged operators (which are cutters), it was assumed that the relaxation parameter $\lambda_k\in [\varepsilon, 1+\frac 1 m-\varepsilon]$, where $m$ is the length of the longest string.

An extrapolation formula combining \eqref{intro:sigma:SC} and \eqref{intro:sigma:CC} was proposed by Nikazad and Mirzapour in \cite{NikazadMirzapour2017}. We present it here for cutters only, although it should be mentioned that it was formulated for $\alpha_i$-relaxed cutters, $\alpha_i\in (0,2)$:

\begin{equation}\label{intro:sigma:SA2}
  \sigma_k(x):=
  \frac
  {
    \sum\limits_{n=1}^{N_k} \omega_n^k
    \left(
      \left\|Q_n^kx-x \right\|^2
        + \sum\limits_{l=1}^{m_n^k}\|Q_{n,l}^kx-Q_{n,l-1}^{k}x\|^2
      \right)
  }
  {2 \|\sum_{n=1}^{N_k}\omega_n^kQ_n^k x-x\|^2}
  \geq \frac 1 2 + \frac 1 {2m},
\end{equation}
where $m_n^k$ is the length of the string $J_n^k$ and $Q_{n,l}^k$ is the product of the first $l$ operators along the string $J_n^k$ with $Q_{n,0}^k:=\id$. Method \eqref{intro:xk}--\eqref{intro:Tk}, when combined with \eqref{intro:sigma:SA2}, was shown to converge weakly while assuming that each $U_i$ is a weakly regular cutter, $\lambda_k\in [\varepsilon, 2-\varepsilon]$ and that there is no other dependence on $k$ within the iterations, that is, $T_k=T$, $\sigma_k=\sigma$ and $I_k=I$.

As far we know, the linear rate of convergence in the framework of the extrapolated string averaging cutter, including the extrapolated cyclic cutter, has not been investigated so far. The exception is the extrapolated simultaneous cutter \cite{ZhaoNgLiYao2017}, as we have already mentioned above. Nevertheless, there are a few known results which deal with the linear rate of the non-extrapolated version of method \eqref{intro:xk}--\eqref{intro:Tk}. For example, Bargetz et al. \cite{BargetzReichZalas2017} have shown that the dynamic string averaging projection method converges linearly whenever the family of sets is boundedly linearly regular. A linear rate of convergence for boundedly linearly regular cutters has recently been established in \cite{CegielskiReichZalas2018}, although the result required the additional assumption that each subfamily of $\{C_i\mid i\in I\}$ is boundedly linearly regular; see \cite[Example 5.7, Theorem 6.2 ]{CegielskiReichZalas2018}. The above-mentioned assumption guarantees that each one of the operators $Q_n^k$ that appears in \eqref{intro:Tk} and, in consequence $T_k$, is boundedly linearly regular; see Remark \ref{rem:LR}. As was shown in \cite{ReichZaslavski2014}, this may not be the case for some families of sets.

There are many other works which deal with the non-extrapolated string averaging method \eqref{intro:xk}--\eqref{intro:Tk} although linear convergence was not discussed therein. In particular, a static version of the string averaging method  \eqref{intro:xk}--\eqref{intro:Tk}  was introduced in \cite{CensorElfvingHerman2001} for the metric and Bregman projections in Euclidean space. A dynamic variant appeared, for example, in \cite{CensorZaslavski2013} and in \cite{ReichZalas2016}. Other works related to string averaging methods are, for example, \cite{CensorTom2003}, \cite{BauschkeMatouskovaReich2004}, \cite{CensorSegal2007}, \cite{ButnariuDavidiHermanKazantsev2007}, \cite{ButnariuReichZaslavski2008}, \cite{Zaslavski2014}, \cite{CensorMansour2016}, \cite{Zaslavski2016} and \cite{NikazadAbbasiMirzapour2016}.

We emphasize here that a linear rate of convergence is known in the framework of the non-extrapolated simultaneous and cyclic methods with boundedly linearly regular operators and families of sets; see, for example, \cite{BauschkeNollPhan2015} or \cite{BorweinLiTam2017} with averaged operators, or \cite{BauschkeBorwein1996} and \cite{KolobovReichZalas2017} with cutters. Here also no additional linear regularity of subfamilies is required.

\subsection*{Contribution and organization of the paper}
The main contribution of this paper is the formulation of sufficient conditions for linear convergence of the extrapolated string averaging method \eqref{intro:xk}--\eqref{intro:Tk} in terms of bounded linear regularity of the algorithmic operators $U_i$ and the family of sets $\{C_i\mid i\in I\}$. Following \cite[Theorem 9]{BargetzReichZalas2017}, which is formulated for projections only, we show that the linear rate of convergence holds without any additional regularities of the subfamilies of $\{C_i\mid i\in I\}$, which is required in \cite{CegielskiReichZalas2018}; see the explanation above. Our slightly modified extrapolation functional $\sigma_k\colon \mathcal H \to [1,\infty)$, which is based on \eqref{intro:sigma:SA2}, when combined with the relaxation parameters $\lambda_k\in[\underline \lambda, \overline \lambda]\subseteq (0, 1+ 1/m)$ allows us to unify the examples of $\sigma_k$ presented in \eqref{intro:sigma:SC}--\eqref{intro:sigma:SA2}; see Remark \ref{rem:lambda}. Our estimate for the linear convergence rate improves the one presented in \cite{BargetzReichZalas2017, CegielskiReichZalas2018} and, when reduced to projections only, coincides with known results for cyclic and simultaneous projections. For completeness, we also discuss weak and norm convergence. We emphasize here that in all the three types of convergence we allow a dynamically changing sequence of operators $\{T_k\}$ which is not the case in \cite{CegielskiCensor2012, NikazadMirzapour2017, NikazadMirzapour2015}.

In addition to the linear rate of convergence, we also establish new results related to the linear regularity of operators, which we believe are of independent interest.

Finally, we present results of simple numerical simulations, which show that extrapolation may indeed be considered an acceleration technique for some instances of the string averaging method.

Our paper is organized as follows. In Section \ref{sec:preliminaries} we introduce our notations, definitions and useful facts. In Section \ref{sec:ExtrOper} we discuss basic properties of extrapolated operators (Theorem \ref{th:ESA}), whereas in Section \ref{sec:methods} we use these properties to derive the main result of this paper (Theorem \ref{th:main}). In the last section we present the results of our numerical simulations.

\section{Preliminaries}\label{sec:preliminaries}
In this paper $\mathcal{H}$ always denotes a real Hilbert space. For a sequence $\{x^k\}_{k=0}^\infty$ in $\mathcal{H}$ and a point $x^\infty\in\mathcal{H}$, we use the notations $x^k\rightharpoonup x^\infty$ and $x^k\to x^\infty$ to indicate that $\{x^k\}_{k=0}^{\infty}$ converges to $x^\infty$ weakly and in norm, respectively.

Given a nonempty, closed and convex set $C\subseteq\mathcal{H}$, we denote by $P_C\colon\mathcal{H}\to\mathcal{H}$ the \emph{metric projection onto $C$}, that is, the operator which maps $x\in\mathcal{H}$ to the unique point in $C$ closest to $x$. The operator $P_C$ is well defined for such sets $C$ and it is not difficult to see that it is nonexpansive; see, for example, \cite[Proposition 4.8]{BauschkeCombettes2011}, \cite[Theorem 2.2.21]{Cegielski2012} or \cite[Theorem 3.6]{GoebelReich1984}. We denote by $ d(x,C) := \inf \{\|x-z\|\mid z\in C\}$ the distance of $x$ to $C$.

For a given operator $U\colon\mathcal H\to\mathcal H$, we denote by $\fix U$ the set of fixed points of $U$. We denote by $U_\alpha$ the ($\alpha$-)\textit{relaxation of} $U$ defined by $U_\alpha x:=x+\alpha(Ux-x)$ for each $x\in \mathcal H$, where $\alpha >0$. We use the same symbol $U_\alpha$ for the \textit{generalized relaxation}, where $\alpha\colon\mathcal H \to (0,\infty)$. In this case $U_\alpha x:=x+\alpha(x)(Ux-x)$ for each $x\in\mathcal H$. If $\alpha(x)\geq 1$ for each $x\in \mathcal H$, then we say that $U_\alpha$ is an \textit{($\alpha$-)extrapolation} of $U$.

\subsection{Quasi-nonexpansive operators}
\begin{definition}\label{def:QNE}
Let $U\colon\mathcal H\rightarrow\mathcal H$ be an
operator with $\fix U\neq \emptyset$. We say that $U$ is
\begin{enumerate}[(i)]
\item \textit{quasi-nonexpansive} (QNE) if for all $x\in\mathcal H$ and all $z\in\fix U$,
\begin{equation}
\| U x -z\|\leq\| x-z\|;\label{eq:qne}%
\end{equation}

\item $\rho$\textit{-strongly quasi-nonexpansive} ($\rho$-SQNE), where $\rho\geq 0$, if for all $x\in\mathcal H$ and all $z\in\fix U$,
\begin{equation}
\| Ux-z\|^2\leq\| x-z\|^2-\rho\| U
x -x\|^2;\label{eq:sqne}
\end{equation}

\item a \textit{cutter} if for all $x\in\mathcal H$ and all $z\in\fix U$,
\begin{equation}
\langle z-U x ,x-U x \rangle\leq 0\label{eq:cutter}.
\end{equation}
\end{enumerate}
\end{definition}

Below we recall some properties of quasi-nonexpansive operators that will be used in the sequel. A more comprehensive overview can be found in \cite[Chapter 2]{Cegielski2012}.

\begin{theorem}\label{th:SQNE}
  Let $U\colon\mathcal H \rightarrow \mathcal H$ be such that $\fix U\neq\emptyset$ and let $\rho\geq 0$. The following conditions are equivalent:
  \begin{enumerate}[(i)]
    \item $U$ is $\rho$-SQNE.
    \item $\id+\frac{1+\rho}{2}(U-\id)$ is a cutter.
    \item for each $x\in\mathcal H$ and $z\in \fix U$, we have
    \begin{equation}\label{eq:th:SQNE}
      \langle z-x, Ux-x\rangle \geq \frac{1+\rho}{2}\|Ux-x\|^2.
    \end{equation}
  \end{enumerate}
\end{theorem}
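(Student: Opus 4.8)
The plan is to route all three equivalences through condition (iii), treating \eqref{eq:th:SQNE} as a hub, since both (i) and (ii) translate into it by elementary algebra. The single computational tool throughout will be the squared-norm expansion
\[
\|Ux-z\|^2 = \|Ux-x\|^2 + 2\langle Ux-x, x-z\rangle + \|x-z\|^2,
\]
obtained by writing $Ux-z = (Ux-x)+(x-z)$.

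First I would prove (i) $\Leftrightarrow$ (iii). Fixing arbitrary $x\in\mathcal H$ and $z\in\fix U$, the expansion shows that the defining inequality \eqref{eq:sqne} is equivalent to $\|Ux-x\|^2 + 2\langle Ux-x,x-z\rangle \leq -\rho\|Ux-x\|^2$. Collecting the terms in $\|Ux-x\|^2$, replacing $\langle Ux-x,x-z\rangle$ by $-\langle z-x,Ux-x\rangle$, and dividing by $2$ turns this into exactly \eqref{eq:th:SQNE}. Every manipulation is a reversible equivalence, so both implications follow simultaneously.

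Next I would prove (ii) $\Leftrightarrow$ (iii). Setting $V:=\id+\tfrac{1+\rho}{2}(U-\id)$, we have $Vx-x=\tfrac{1+\rho}{2}(Ux-x)$, and since $\rho\geq 0$ forces $\tfrac{1+\rho}{2}>0$, a one-line check gives $\fix V=\fix U$. The preparatory step is to restate the cutter property of $V$: expanding $\langle z-Vx,x-Vx\rangle=\langle z-x,x-Vx\rangle+\|x-Vx\|^2$ shows that \eqref{eq:cutter} for $V$ is equivalent to $\langle z-x,Vx-x\rangle\geq\|Vx-x\|^2$ for all $x$ and all $z\in\fix V$. Substituting $Vx-x=\tfrac{1+\rho}{2}(Ux-x)$ and dividing by the positive factor $\tfrac{1+\rho}{2}$ reduces this to \eqref{eq:th:SQNE}, and the steps reverse, giving the equivalence and closing the cycle.

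The computations are almost entirely routine; the only genuine point of care is the bookkeeping around fixed-point sets and positivity. I would make sure to record that $\fix V=\fix U$ before invoking the cutter property of $V$, because condition (ii) concerns the cutter inequality relative to $\fix V$ whereas (i) and (iii) are stated over $\fix U$; the strict positivity $\tfrac{1+\rho}{2}>0$ is precisely what makes these sets coincide and what legitimizes the final division. With that observation in place, no individual step presents a real obstacle.
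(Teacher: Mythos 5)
Your proposal is correct: both equivalences reduce, via the expansion $\|Ux-z\|^2=\|Ux-x\|^2+2\langle Ux-x,x-z\rangle+\|x-z\|^2$ and the identity $Vx-x=\tfrac{1+\rho}{2}(Ux-x)$ with $\fix V=\fix U$, to the hub inequality \eqref{eq:th:SQNE}, and every step is reversible. The paper itself offers no argument here (it only cites \cite[Corollary 2.1.43 and Section 2.1.30]{Cegielski2012}), and your computation is precisely the standard one carried out in that reference, so in substance you have reproduced the intended proof.
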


\begin{proof}
  See \cite[Corollary 2.1.43 and Section 2.1.30]{Cegielski2012}.
\end{proof}

\begin{corollary}\label{th:SQNElambda}
Let $U\colon\mathcal H \to \mathcal H$ be $\rho$-SQNE, where $\rho>0$, and let $\alpha\colon\mathcal H \to (0,\infty)$. Then, for each $x\in\mathcal H$ and $z\in \fix U$, the generalized relaxation $U_\alpha$ satisfies
\begin{equation}\label{eq:th:SQNElambda}
  \|U_\alpha x-z\|^2\leq \|x-z\|^2-\left(\frac \rho {\alpha(x)}+\frac{1-\alpha(x)}{\alpha(x)}\right)\|U_\alpha x-x\|^2.
\end{equation}
\end{corollary}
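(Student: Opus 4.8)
The plan is to reduce everything to the characterization of $\rho$-SQNE operators provided by Theorem \ref{th:SQNE}(iii) and then carry out an elementary expansion of a square. Throughout I abbreviate $\alpha := \alpha(x)$. The key observation is that the generalized relaxation satisfies $U_\alpha x - x = \alpha(Ux - x)$, so that the displacements $Ux-x$ and $U_\alpha x - x$ are proportional; in particular $\|Ux-x\|^2 = \alpha^{-2}\|U_\alpha x - x\|^2$. Since the right-hand side of the asserted inequality is expressed through $\|U_\alpha x - x\|^2$, the whole computation should be organized so as to eliminate $\|Ux-x\|^2$ in favour of $\|U_\alpha x - x\|^2$ at the very end.

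First I would expand
\[
\|U_\alpha x - z\|^2 = \|(U_\alpha x - x) + (x - z)\|^2 = \|U_\alpha x - x\|^2 + 2\langle U_\alpha x - x, x - z\rangle + \|x - z\|^2.
\]
The middle term is the only one that uses the hypothesis. Writing $U_\alpha x - x = \alpha(Ux - x)$ turns it into $-2\alpha\langle Ux - x, z - x\rangle$, and here I would invoke Theorem \ref{th:SQNE}(iii), namely $\langle z - x, Ux - x\rangle \geq \tfrac{1+\rho}{2}\|Ux - x\|^2$, to obtain
\[
2\langle U_\alpha x - x, x - z\rangle \leq -\alpha(1+\rho)\|Ux - x\|^2.
\]

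Substituting this estimate back and then replacing $\|Ux-x\|^2$ by $\alpha^{-2}\|U_\alpha x - x\|^2$ yields
\[
\|U_\alpha x - z\|^2 \leq \|x - z\|^2 + \left(1 - \tfrac{1+\rho}{\alpha}\right)\|U_\alpha x - x\|^2.
\]
It then remains only to recognize the coefficient, since $\tfrac{1+\rho}{\alpha} - 1 = \tfrac{\rho}{\alpha} + \tfrac{1-\alpha}{\alpha}$, which is exactly the quantity appearing in \eqref{eq:th:SQNElambda}. There is no genuine obstacle here: once Theorem \ref{th:SQNE}(iii) is applied, the argument is a routine expansion. The only point demanding a little care is the bookkeeping of the factors of $\alpha$, and in particular expressing the final bound in terms of the relaxed displacement $\|U_\alpha x - x\|^2$ rather than $\|Ux - x\|^2$, because the coefficient structure $\rho/\alpha + (1-\alpha)/\alpha$ emerges only after this substitution.
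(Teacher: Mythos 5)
Your proposal is correct and follows essentially the same route as the paper's proof: expand $\|U_\alpha x-z\|^2$ as a square, bound the cross term via Theorem \ref{th:SQNE}(iii), and use $U_\alpha x-x=\alpha(x)(Ux-x)$ to express the coefficient in terms of $\|U_\alpha x-x\|^2$. The only difference is cosmetic bookkeeping of the factors of $\alpha(x)$.
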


\begin{proof}
Since $U$ is $\rho$-SQNE, by Theorem \ref{th:SQNE} (iii), we have
\begin{equation}\label{}
  \langle x-z, Ux-x\rangle \leq -\frac{\rho+1}{2}\|Ux-x\|^2.
\end{equation}
Moreover,
\begin{equation}\label{}
  U_\alpha x-x = \alpha(x)(Ux-x).
\end{equation}
Consequently,
\begin{align}\nonumber
  \|U_\alpha x-z\|^2
  & = \|x-z+ \alpha(x)(Ux-x)\|^2\\ \nonumber
  & = \|x-z\|^2 + 2\alpha (x)\langle x-z, U x-x\rangle + \alpha^2(x)\|Ux-x\|^2\\ \nonumber
  & \leq \|x-z\|^2 -2\alpha (x)\frac{\rho+1}{2}\|Ux-x\|^2 + \alpha^2(x)\|Ux-x\|^2\\
  & = \|x-z\|^2 - \alpha^2(x)\left( \frac \rho {\alpha(x)}
  + \frac{1-\alpha(x)}{\alpha(x)}\right)\|Ux-x\|^2,
\end{align}
which completes the proof.
\end{proof}

\begin{remark}
Observe that inequality \eqref{eq:th:SQNElambda} becomes significant when
\begin{equation}\label{}
  \frac \rho {\alpha(x)}+\frac{1-\alpha(x)}{\alpha(x)} > 0
\end{equation}
which holds for any $\alpha(x)\in(0,1+\rho)$. We will apply \eqref{eq:th:SQNElambda} in this form in the sequel; see, for example, Theorems \ref{th:ESA} or \ref{th:main}.
\end{remark}

\begin{theorem} \label{th:SQNE2}
Let $U_i\colon\mathcal{H}\rightarrow \mathcal{H}$ be $\rho_{i}$-SQNE, $i=1,2,...,M$, where $M\geq 1$, $\rho_i> 0$ and assume that $C=\bigcap_{i=1}^M \fix U_i\neq \emptyset$.

\begin{enumerate}[(i)]
  \item Let $U:=\sum_{i=1}^M\omega_i U_i$, where $\omega _{i}> 0$ and $\sum_{i=1}^M\omega_i=1$. Then $U$ is $\rho $-SQNE, $\rho =\min_{i}\rho _{i}$ and $\fix U=C$. Moreover, if each $U_i$ is a cutter ($\equiv$ 1-SQNE), then for any $x\in \mathcal{H}$ and $z\in C$, we have
      \begin{equation}\label{th:SQNE2:ineq1}
        \langle Ux-x, z-x\rangle \geq \sum_{i=1}^{M} \omega_i\|U_ix-x\|^2.
      \end{equation}

  \item Let $U:=U_M\ldots U_1$. Then $U$ is $\rho$-SQNE,     $\rho=\min_{i}\rho_{i}/M$ and $\fix U=C$. Moreover, if each $U_i$ is a cutter ($\equiv$ 1-SQNE), then for any $x\in \mathcal{H}$ and $z\in C$, we have
      \begin{equation}\label{th:SQNE2:ineq2}
        \langle Ux-x, z-x\rangle \geq \frac 1 2 \|Ux-x\|^2 + \frac 1 2 \sum_{i=1}^M \|U_i\ldots U_1 x-U_{i-1}\ldots U_1 x\|^2,
      \end{equation}
      where we set $U_{i-1}\ldots U_1 x:=x$ for $i=1$.
\end{enumerate}

\end{theorem}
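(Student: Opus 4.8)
The plan is to treat the two parts separately, in each case first establishing the $\rho$-SQNE property together with the fixed point identity $\fix U = C$, and then, under the additional cutter hypothesis, deriving the sharper inner product estimate. For part (i), the $\rho$-SQNE property of the convex combination $U=\sum_i \omega_i U_i$ follows by expanding $\|Ux-z\|^2$ using convexity: since $z\in C\subseteq\fix U_i$ for each $i$, I would apply the $\rho_i$-SQNE inequality term by term and use the convexity of $\|\cdot-z\|^2$ to write $\|Ux-z\|^2 \le \sum_i\omega_i\|U_i x-z\|^2 \le \sum_i\omega_i\|x-z\|^2 - \sum_i\omega_i\rho_i\|U_ix-x\|^2$. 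The first part collapses to $\|x-z\|^2$; bounding $\rho_i\ge\rho:=\min_i\rho_i$ and relating $\sum_i\omega_i\|U_ix-x\|^2$ to $\|Ux-x\|^2$ via convexity of the norm (Jensen) yields the desired $-\rho\|Ux-x\|^2$ term. The identity $\fix U=C$ follows because $\fix U\supseteq C$ is clear, and the reverse inclusion uses that a strict convex combination of points, each no farther from $z$ than $x$, can equal $x$ only if they all coincide. For the sharper inequality \eqref{th:SQNE2:ineq1} in the cutter case ($\rho_i=1$), I would simply write $\langle Ux-x,z-x\rangle = \sum_i\omega_i\langle U_ix-x,z-x\rangle$ and apply Theorem \ref{th:SQNE} (iii) with $\rho=1$ to each summand.

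For part (ii), the composition $U=U_M\cdots U_1$, I would again split into the general SQNE claim and the cutter refinement. Writing $y^i:=U_i\cdots U_1 x$ with $y^0:=x$ and $y^M=Ux$, I would telescope: applying the $\rho_i$-SQNE inequality to each factor gives $\|y^i-z\|^2\le\|y^{i-1}-z\|^2-\rho_i\|y^i-y^{i-1}\|^2$, and summing over $i$ produces
\begin{equation}\label{plan:telescope}
  \|Ux-z\|^2 \le \|x-z\|^2 - \sum_{i=1}^M \rho_i\|y^i-y^{i-1}\|^2.
\end{equation}
The fixed point identity $\fix U=C$ is standard for compositions of SQNE operators with common fixed points. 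The genuinely delicate step is converting the sum of consecutive-step squared norms in \eqref{plan:telescope} into a multiple of $\|Ux-x\|^2$ so as to extract the claimed constant $\rho=\min_i\rho_i/M$. Here I expect the main obstacle: one must bound $\|Ux-x\|^2=\|y^M-y^0\|^2=\|\sum_{i=1}^M(y^i-y^{i-1})\|^2$, and by the Cauchy--Schwarz (or power-mean) inequality this is at most $M\sum_i\|y^i-y^{i-1}\|^2$. Combining with $\rho_i\ge\min_i\rho_i$ in \eqref{plan:telescope} gives $\sum_i\rho_i\|y^i-y^{i-1}\|^2\ge(\min_i\rho_i/M)\|Ux-x\|^2$, exactly the factor $M$ loss that accounts for the composition.

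The cutter refinement \eqref{th:SQNE2:ineq2} is where the proof requires the most care, since it is not merely a rescaling of the SQNE bound but a genuinely sharper identity-based estimate. I would start from the decomposition $\langle Ux-x,z-x\rangle=\langle y^M-y^0,z-x\rangle$ and insert the telescoping sum $z-x=\sum(z-y^{i})+\ldots$, but more promisingly I would apply Theorem \ref{th:SQNE} (iii) with $\rho_i=1$ to each factor $U_i$ at the point $y^{i-1}$, obtaining $\langle z-y^{i-1},y^i-y^{i-1}\rangle\ge\|y^i-y^{i-1}\|^2$. The strategy is then to sum these inequalities and reorganize the cross terms, using the algebraic identity $\langle y^M-y^0,z-y^0\rangle=\sum_i\langle y^i-y^{i-1},z-y^{i-1}\rangle+\sum_i\langle y^i-y^{i-1}, y^{i-1}-y^0\rangle$, together with the polarization identity $2\langle y^i-y^{i-1},y^{i-1}-y^0\rangle=\|y^i-y^0\|^2-\|y^{i-1}-y^0\|^2-\|y^i-y^{i-1}\|^2$, whose cross-term sum telescopes to $\|y^M-y^0\|^2=\|Ux-x\|^2$. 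Bookkeeping the $\frac12$ coefficients and matching the residual $\sum_i\|y^i-y^{i-1}\|^2$ terms should reproduce exactly the right-hand side of \eqref{th:SQNE2:ineq2}; the arithmetic of collecting these terms correctly is the part I would verify most carefully.
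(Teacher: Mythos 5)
Your argument is correct and complete. Note, however, that the paper does not actually prove this theorem: it delegates everything to the literature, citing \cite[Theorems 2.1.48 and 2.1.50]{Cegielski2012} for the SQNE constants and fixed-point identities and \cite[Lemma 4.10.1]{Cegielski2012} for the key inequality \eqref{th:SQNE2:ineq2}. What you have written is essentially a self-contained reconstruction of those cited proofs. All the pieces check out: in (i) the combination of convexity of $\|\cdot-z\|^2$, the bound $\rho_i\geq\min_i\rho_i$, and Jensen applied to $\|\sum_i\omega_i(U_ix-x)\|^2$ gives exactly the $\rho$-SQNE inequality, and \eqref{th:SQNE2:ineq1} is immediate from Theorem \ref{th:SQNE}(iii) with $\rho=1$, which is also how the paper describes it. In (ii) your telescoping plus the Cauchy--Schwarz bound $\|\sum_i(y^i-y^{i-1})\|^2\leq M\sum_i\|y^i-y^{i-1}\|^2$ yields the constant $\min_i\rho_i/M$, and the decomposition
\begin{equation*}
\langle y^M-y^0,z-y^0\rangle=\sum_{i=1}^M\langle y^i-y^{i-1},z-y^{i-1}\rangle+\sum_{i=1}^M\langle y^i-y^{i-1},y^{i-1}-y^0\rangle,
\end{equation*}
with the first sum bounded below by $\sum_i\|y^i-y^{i-1}\|^2$ via the cutter property at each intermediate point and the second sum evaluated exactly by polarization (telescoping to $\frac12\|Ux-x\|^2-\frac12\sum_i\|y^i-y^{i-1}\|^2$), combines to precisely the right-hand side of \eqref{th:SQNE2:ineq2}; this identity-based computation is the content of \cite[Lemma 4.10.1]{Cegielski2012}. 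The only cosmetic point is your strict-convexity argument for $\fix U=C$ in part (i): it works, but it is cleaner to read off $U_ix=x$ for all $i$ directly from the pre-Jensen inequality $\|Ux-z\|^2\leq\|x-z\|^2-\rho\sum_i\omega_i\|U_ix-x\|^2$ when $Ux=x$, exactly as your own telescoping argument handles the composition case in part (ii).
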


\begin{proof}
For (i), see \cite[Theorem 2.1.50]{Cegielski2012}. Inequality \eqref{th:SQNE2:ineq1} follows directly from Theorem \ref{th:SQNE} (iii) and definition of $U$.
For (ii), see \cite[Theorem 2.1.48]{Cegielski2012}. Inequality \eqref{th:SQNE2:ineq2} follows from \cite[Lemma 4.10.1]{Cegielski2012} and the last equality in its proof.
\end{proof}

\subsection{Regular sets and regular operators}\label{sec:regularities}
\begin{definition}\label{def:BR:set}
  Given a set $S\subseteq\mathcal{H}$, a family $\{C_1,\ldots,C_M\}$
   of convex and closed sets $C_i\subseteq\mathcal{H}$ with a nonempty intersection $C:=\bigcap_{i=1}^M C_i$ is called
  \begin{enumerate}[(i)]
  \item \emph{regular over $S$} if for every sequence $\{x^k\}\subseteq S$, we have
    \begin{equation}
      \max_{i=1,\ldots, M} d(x^k,C_i) \to_k 0\quad \Longrightarrow\quad d(x^k,C) \to_k 0;
    \end{equation}
  \item \emph{$\kappa_S$-linearly regular over $S$} if the inequality
    \begin{equation}
      d(x,C) \leq \kappa_S \max_{i=1,\ldots,M} d(x,C_i)
    \end{equation}
    holds for all $x\in S$ and some constant $\kappa_S>0$.
  \end{enumerate}
  We say that the family $\{C_1,\ldots,C_M\}$ is \emph{boundedly (linearly) regular} if it is ($\kappa_S$-linearly) regular over every bounded subset $S\subseteq\mathcal{H}$. We say that $\{C_1,\ldots,C_M\}$ is \emph{(linearly) regular} if it is ($\kappa_S$-linearly) regular over $S=\mathcal H$.
\end{definition}

\begin{example}\label{example:regularSets}
Let $C_i\subseteq\mathcal H$ be as in Definition \ref{def:BR:set} and let $\mathcal C:=\{C_1,\ldots,C_M\}$. Following \cite[Fact 5.8]{BauschkeNollPhan2015}, we recall some examples of regular families of sets:
\begin{enumerate}[(i)]
\item If $\dim\mathcal H<\infty$, then $\mathcal C$ is boundedly regular.
\item If $C_M\cap\interior\bigcap_{i=1}^{M-1}C_i\neq\emptyset$, then $\mathcal C$ is boundedly linearly regular.
\item If each $C_i$ is a half-space, then $\mathcal C$ is linearly regular.
\item If $\dim\mathcal H<\infty$, $C_i$ is a half-space for $i=1,\ldots,L$, and $\bigcap_{i=1}^LC_i \cap \bigcap_{i=L+1}^M\ri C_i\neq\emptyset$, then $\mathcal C$ is boundedly linearly regular.
\item If each $C_i$ is a closed subspace, then $\mathcal C$ is linearly regular if and only if $\sum_{i=1}^MC_i^\perp$ is closed.
\end{enumerate}
\end{example}

For more information regarding regular families of sets see \cite{ReichZaslavski2014b}.

\begin{definition}\label{def:BR:oper}
Let $U:\mathcal H\rightarrow\mathcal H$ be an
operator with a fixed point, that is, $\fix U \neq \emptyset$, and let $S\subseteq\mathcal H$ be nonempty. We say that the operator $U$ is
\begin{enumerate}[(i)]
\item \textit{weakly regular} over $S$ if for any sequence $\{x^{k}\}_{k=0}^\infty \subseteq S$ and $x^\infty\in \mathcal H$,
\begin{equation}
\label{eq:def:DC}
\left .
\begin{array}{l}
x^{k}\rightharpoonup x^\infty\\
U x^k-x^k\rightarrow 0
\end{array}
\right\}\quad\Longrightarrow\quad x^\infty\in \fix U;
\end{equation}

\item \textit{regular} over $S$ if for any sequence $\{x^{k}\}_{k=0}^\infty \subseteq S$,
\begin{equation} \label{eq:def:BR:oper}
\lim_{k\rightarrow\infty}\| Ux^k-x^k\| =0\quad\Longrightarrow\quad \lim_{k\rightarrow\infty}d(x^k,\fix U)=0;
\end{equation}

\item \textit{linearly regular} over $S$ if there is $\delta_S>0$ such that for every $x\in S$,
\begin{equation} \label{eq:def:BLR:oper}
\delta_S d(x,\fix U)\leq \|Ux-x\|.
\end{equation}
\end{enumerate}
If any of the above regularity conditions holds for every subset $S\subseteq\mathcal H$, then we simply omit the phrase ``over $S$". If the same condition holds when restricted to bounded subsets $S\subseteq\mathcal H$, then we precede the term with the adverb \textit{boundedly}. Since there is no need to distinguish between boundedly weakly and weakly regular operators, we call both weakly regular.
\end{definition}

Clearly, weakly regular operators are those for which $U-\id$ is demi-closed at zero and they go back to \cite{BrowderPetryshyn1966} and \cite{Opial1967}. Regular operators appeared already in \cite[Theorem 1.2]{PetryshynWilliamson1973} whereas linearly regular operators can be found in \cite[Theorem 2]{Outlaw1969}. For a detailed historical overview of regular operators, we refer the reader to \cite{KolobovReichZalas2017} and \cite{CegielskiReichZalas2018}. We mention here only a few works where these operators appeared implicitly or explicitly; see, for example, \cite{BauschkeBorwein1996}, \cite{KiwielLopuch1997}, \cite{AoyamaKohsaka2014}, \cite{CegielskiZalas2013}, \cite{CegielskiZalas2014}, \cite{BauschkeNollPhan2015}, \cite{Cegielski2015}, \cite{ReichZalas2016}, \cite{BorweinLiTam2017} and \cite{ReichSalinas2017}.

Regular families of sets and regular operators are related in the sense that $U:=P_{C_{i(x)}}x$, where $i(x):=\argmax_{i=1,\ldots,M} d(x,C_i)$, is (linearly) regular over $S$ if and only if the family $\{C_1,\ldots,C_M\}$ is (linearly) regular over $S$. This was observed, for example, in \cite[Remark 2.13]{KolobovReichZalas2017}.

 It is not difficult to see that (bounded) linear regularity implies (bounded) regularity. One can also show that (bounded) regularity implies weak regularity. On the other hand, it turns out that weak regularity implies bounded regularity in $\mathcal H=\mathbb R^n$; see, for example \cite[Proposition 4.1]{CegielskiZalas2014} or \cite[Theorem 4.3]{CegielskiReichZalas2018}. This leads to the following two examples:

\begin{example}[Nonexpansive mapping]
  Let $U\colon\mathcal H\to\mathcal H$ be nonexpansive with $\fix U\neq\emptyset$. Then $U$ is weakly regular due to the demi-closednes principle \cite{Opial1967}. If $\mathcal H=\mathbb R^n$, then $U$ is boundedly regular \cite[Proposition 4.1, Corollary 4.2]{CegielskiZalas2014}. Moreover, in this case there is a more explicit connection between $d(x,\fix U)$ and $\|Tx-x\|$. To be specific, by \cite[Theorem 3]{LukeThaoTam2018}, for every bounded subset $S\subseteq \mathbb R^n$ which intersects $\fix U$, there is a bounded, increasing function $f_S\colon \mathbb R_+ \to \mathbb R_+$, right-continuous at $t=0$ with $f(0)=0$, which for all $x\in S$ satisfies
  \begin{equation}\label{}
    d(x,\fix U)\leq f_S(\|Tx-x\|).
  \end{equation}
\end{example}

\begin{example}[Subgradient projection]
Following \cite[Example 2.15]{KolobovReichZalas2017} or \cite[Example 3.5]{CegielskiReichZalas2018}, let $f\colon\mathcal H \to \mathbb R$ be a convex continuous function. Let $\partial f(x):=\{g\in\mathcal H \mid \forall y\in \mathcal H \ \langle g,y-x\rangle \leq f(y)-f(x) \}$ be the \textit{subdifferential} of $f$ at $x$ which, by the assumptions on $f$, is nonempty. Suppose that $S(f,0):=\{x\mid f(x)\leq 0\}\neq \emptyset$. For each $x$, we fix a subgradient $g(x)\in \partial f(x)$ and define the \textit{subgradient projection} by
\begin{equation}\label{}
  P_fx:=
  \begin{cases}
    x-\frac{f(x)}{\|g(x)\|^2}g(x) & \mbox{if } f(x)>0 \\
    x, & \mbox{otherwise}.
  \end{cases}
\end{equation}
If $f$ is Lipschitz continuous on bounded sets, then $P_f$ is weakly regular. In particular, if $\mathcal H=\mathbb R^d$, then $P_f$ is boundedly regular. In addition, if $f(x)<0$ for some $x$, then $P_f$ is boundedly linearly regular. It may also happen that the subgradient projection is not boundely regular; see \cite[Example 3.6]{CegielskiReichZalas2018}.
\end{example}

\begin{remark}[Relation between $\rho$ and $\delta_S$]\label{rem:delta}
  Observe that if $U$ is $\rho$-SQNE and $\delta_S$-linearly regular over some nonempty subset $S\subseteq\mathcal H$, then $\rho \delta_S^2\leq 1$. In particular, if $U$ is a cutter, then $\delta_S\leq 1$. Indeed, it suffices to substitute $z=P_{\fix U}x$ in the inequality below, which holds true for each $x\in S$ and $z\in \fix U$:
  \begin{equation}\label{}
    \rho \delta_S^2 d^2(x,\fix U) \leq \rho \|Ux-x\|^2\leq \|x-z\|^2.
  \end{equation}
  On the other hand, the first inequality holds true with any $\delta_S>0$ whenever $x\in \fix U$. Without any loss of generality, we can assume in this case that again $\rho \delta^2_S\leq 1$.
\end{remark}

\subsection{Fej\'{e}r monotone sequences}
\begin{definition}
Let $C\subseteq\mathcal H$ be a nonempty, closed and convex set, and let $\{x^k\}_{k=0}^\infty$ be a sequence in $\mathcal H$. We say that $\{x^k\}_{k=0}^\infty$ is \textit{Fej\'er monotone} with respect to $C$ if
\begin{equation}
\|x^{k+1}-z\|\leq\|x^k-z\|
\end{equation}
for all $z\in C$ and every integer $k=0,1,2,\ldots$.
\end{definition}

\begin{theorem} \label{th:Fejer}
Let the sequence $\{x^k\}_{k=0}^\infty$ be Fej\'er monotone with respect to $C$. Then
\begin{enumerate}[(i)]
\item $\{x^k\}_{k=0}^\infty$ converges weakly to some point $x^\infty\in C$ if and only if all its weak cluster points lie in $C$;

\item $\{x^k\}_{k=0}^\infty$ converges strongly to some point $x^\infty\in C$ if and only if
    \begin{equation}
    \lim_{k\rightarrow\infty}d(x^k,C) =0;
    \end{equation}

\item if there is some constant $q\in(0,1)$ such that $d(x^{k+1},C)\leq q d(x^k,C)$ holds for every $k=0,1,2,\ldots$, then $\{x^k\}_{k=0}^\infty$ converges linearly to some point $x^\infty\in C$ and
\begin{equation}
\|x^k-x^\infty\|\leq 2d(x^0,C)q^k;
\end{equation}

\item if $\{x^{ks}\}_{k=0}^\infty$ converges linearly to some point $x^\infty\in C$, that is, $\|x^{ks}-x^\infty\|\leq c q^k$ for some constants $c>0$, $q\in(0,1)$ and integer $s$, then the entire sequence $\{x^k\}_{k=0}^\infty$ converges linearly and moreover,
\begin{equation}
\|x^k-x^\infty\|\leq \frac{c}{(\sqrt[\scriptstyle{s}]{q})^{s-1}} \left(\sqrt[\scriptstyle{s}]{q}\right)^k;
\end{equation}

\item if $\{x^{k}\}_{k=0}^\infty$ converges strongly to some point $x^\infty\in C$, then $\|x^k-x^\infty\|\leq 2d(x^k,C)$ for every $k=0,1,2,\ldots$.
\end{enumerate}
\end{theorem}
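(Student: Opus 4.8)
The plan is to extract, at the outset, the two elementary consequences of Fejér monotonicity that power every part. First, for each fixed $z\in C$ the sequence $\|x^k-z\|$ is non-increasing, hence convergent, and in particular $\{x^k\}$ is bounded. Second, projecting onto $C$ and using that $P_Cx^k\in C$ together with monotonicity gives, for all $m\geq k$, the estimate $\|x^m-P_Cx^k\|\leq\|x^k-P_Cx^k\|=d(x^k,C)$, so that by the triangle inequality
\begin{equation}\label{eq:key}
\|x^m-x^k\|\leq\|x^m-P_Cx^k\|+\|x^k-P_Cx^k\|\leq 2d(x^k,C).
\end{equation}
This single inequality immediately yields (v): assuming $x^k\to x^\infty\in C$ and letting $m\to\infty$ in \eqref{eq:key} gives $\|x^\infty-x^k\|\leq 2d(x^k,C)$.

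For (ii) the forward implication is trivial, since $d(x^k,C)\leq\|x^k-x^\infty\|\to 0$. For the converse, \eqref{eq:key} shows that $\{x^k\}$ is Cauchy as soon as $d(x^k,C)\to 0$, hence it converges in norm to some $x^\infty$, and $x^\infty\in C$ because $C$ is closed and $d(\cdot,C)$ is continuous. Part (iii) then follows by iteration: the hypothesis $d(x^{k+1},C)\leq q\,d(x^k,C)$ gives $d(x^k,C)\leq q^k d(x^0,C)\to 0$, so (ii) yields $x^k\to x^\infty\in C$, and the estimate just proved in (v) upgrades this to $\|x^k-x^\infty\|\leq 2d(x^k,C)\leq 2d(x^0,C)q^k$.

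For (i) the forward implication is immediate, as a weak limit is the unique weak cluster point and lies in $C$ by assumption. For the converse, since $\{x^k\}$ is bounded it has weak cluster points, all in $C$ by hypothesis, so it suffices to prove uniqueness. Given two weak cluster points $u,v\in C$, the limits $\lim_k\|x^k-u\|$ and $\lim_k\|x^k-v\|$ exist by Fejér monotonicity; expanding
\begin{equation}
\|x^k-u\|^2-\|x^k-v\|^2=2\langle x^k,v-u\rangle+\|u\|^2-\|v\|^2
\end{equation}
and passing to the limit first along a subsequence with $x^k\rightharpoonup u$ and then along one with $x^k\rightharpoonup v$, and subtracting the two resulting identities, forces $2\|u-v\|^2=0$. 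Thus the weak cluster point is unique and $x^k\rightharpoonup x^\infty\in C$.

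For (iv), the idea is to interpolate the fast subsequence through Fejér monotonicity. For an arbitrary index $n$ write $n=ms+r$ with $m=\lfloor n/s\rfloor$ and $0\leq r\leq s-1$; since $ms\leq n$ and $x^\infty\in C$, monotonicity gives $\|x^n-x^\infty\|\leq\|x^{ms}-x^\infty\|\leq cq^m$. Writing $q^m=(\sqrt[s]{q})^{\,n-r}=(\sqrt[s]{q})^{\,n}(\sqrt[s]{q})^{-r}$ and bounding $(\sqrt[s]{q})^{-r}\leq(\sqrt[s]{q})^{-(s-1)}$, which is valid because $\sqrt[s]{q}<1$ and $r\leq s-1$, yields exactly $\|x^n-x^\infty\|\leq \frac{c}{(\sqrt[s]{q})^{s-1}}(\sqrt[s]{q})^{\,n}$. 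I expect the only genuinely delicate points to be the uniqueness-of-weak-cluster-point argument in (i), which is the one conceptual step, and the exponent bookkeeping in (iv), where one must track the floor and the remainder $r$ carefully to land on the stated constant rather than a looser rate; everything else reduces to the estimate \eqref{eq:key}.
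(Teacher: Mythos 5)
Your proof is correct. Note that the paper does not actually prove this theorem: it simply cites \cite[Theorem~2.16 and Proposition~1.6]{BauschkeBorwein1996}, so there is no in-paper argument to compare against line by line. What you have written is a complete, self-contained rendition of exactly the standard arguments behind those cited results: the master inequality $\|x^m-x^k\|\leq 2d(x^k,C)$ for $m\geq k$ (obtained by testing Fej\'er monotonicity against $z=P_Cx^k$) drives (ii), (iii) and (v); the Opial-type uniqueness argument, using that $\lim_k\|x^k-z\|$ exists for every $z\in C$ and expanding $\|x^k-u\|^2-\|x^k-v\|^2$, gives (i); and the division $n=ms+r$ with the bound $(\sqrt[s]{q})^{-r}\leq(\sqrt[s]{q})^{-(s-1)}$ gives (iv) with precisely the stated constant. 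All steps check out, including the two points you flagged as delicate; the only thing left implicit is the routine subsequence argument upgrading ``unique weak cluster point of a bounded sequence'' to weak convergence in (i), which is standard and acceptable. The value of your write-up is that it makes the paper's black-box citation transparent; it buys nothing different mathematically, but it is a faithful and complete proof.
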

\begin{proof}
See, for example, \cite[Theorem 2.16 and Proposition 1.6]{BauschkeBorwein1996}.
\end{proof}

\section{Extrapolated Operators}\label{sec:ExtrOper}

\begin{theorem}[Extrapolated String Averaging]\label{th:ESA}
  Let $U_i\colon\mathcal{H}\to\mathcal{H}$ be a cutter, $i\in I:=\{1,\ldots,M\}$, such that $C:=\bigcap_{i\in I} \fix U_i \neq \emptyset$. Let $\{J_1,\ldots,J_N\}$ be a family of strings satisfying $I=J_1\cup\ldots\cup J_N$. Let $T$ be the string averaging cutter operator defined by
  \begin{equation}\label{th:ESA:U}
    T:=\sum_{n=1}^{N} \omega_n Q_n,
  \end{equation}
  where $\omega_n\in(0,1]$ with $\sum_{n=1}^{N}\omega_n=1$ and $Q_n:=\prod_{j\in J_n} U_j$. Moreover, let $T_{\lambda\sigma}\colon\mathcal H \to \mathcal H$ be defined by
  \begin{equation}\label{eq:ESA:Usigma}
  T_{\lambda\sigma} x:= x + \lambda\sigma(x)(Tx-x),
  \end{equation}
  where the relaxation parameter $\lambda\in (0,1+ \frac 1 m)$, the extrapolation functional $\sigma\colon\mathcal H \to [1,\infty)$ satisfies $1\leq \sigma(x) \leq \tau(x)$ for all $x\in\mathcal H$ and
  \begin{equation}\label{eq:ESA:tau}
    \tau(x):=
    \begin{cases}  \displaystyle
      \frac
      {
      \frac{m}{m+1}\sum\limits_{n=1}^{N} \omega_n
      \left(
      \left\|Q_nx-x \right\|^2
      +\sum\limits_{l=1}^{m_n}\|Q_{n,l}x-Q_{n,l-1}x\|^2
      \right)
      }
      {
      \left\|T x-x\right\|^{2}
      } & \text{if}\ x\notin C\\
      \ 1 & \text{otherwise},
    \end{cases}
  \end{equation}
  where $m_n:=|J_n|$, $m:=\max_{1\leq n\leq N} m_n$ and for each string $J_n:=(i_{1},\ldots,i_{m_n})$, we use the notations $Q_{n,l}:=\prod_{j=1}^{l} U_{i_j}$ and $Q_{n,0}:=\id$. Then the following statements hold:
  \begin{enumerate}[(i)]
  \item $T_{\lambda\sigma}$ is $\rho$-SQNE, where $\fix T_{\lambda\sigma}=C$ and
  \begin{equation}\label{eq:ESA:rho}
    \rho:=\frac 1 {\lambda m} + \frac{1-\lambda}{\lambda}>0.
  \end{equation}
  Moreover, for all $x\in \mathcal H$ and $z\in C$, we have
  \begin{equation}\label{eq:ESA:ineq1}
    \Theta(x)
    \sum_{n=1}^{N}\omega_n \sum_{l=1}^{m_n}\big\|Q_{n,l}x-Q_{n,l-1}x\big\|^2
    \leq  \|x-z\|^2 - \|T_{\lambda\sigma} x- z\|^2,
  \end{equation}
  where
  \begin{equation}\label{eq:ESA:Theta}
    \Theta(x):=
    \lambda\sigma (x)
    \frac{1+m-m\lambda\frac{\sigma(x)}{\tau(x)}}
         {1+m-\frac{m}{\tau(x)}}
    \geq \lambda \left(1-\lambda \frac {m} {1+m}\right)>0.
  \end{equation}
  \item If for every $i\in I$, the operator $U_i$ is $\delta_i$-linearly regular, $\delta_i\in(0,1]$, on the ball $B(z,r)$ for some $z\in C$ and some $r>0$, then for all $x\in B(z,r)$, we have
      \begin{equation}\label{eq:ESA:ineq2}
        \delta^2 \max_{i\in I} d^2(x,\fix U_i) \leq \sum_{n=1}^{N} \sum_{l=1}^{m_n} m_n \big\|Q_{n,l}x-Q_{n,l-1}x\big\|^2,
      \end{equation}
      where $\delta:=\min_{i\in I} \delta_i$.

  \item If, in addition, the family $\{\fix U_i\mid i\in I\}$ is $\kappa$-linearly regular over $B(z,r)$, then $T_{\lambda\sigma}$ also satisfies
    \begin{equation}\label{eq:ESA:ineq3}
      \Theta(x) \frac{\omega\delta^2}{2m\kappa^2}  d\big(x,C\big)
      \leq \|T_{\lambda\sigma} x-x\|
    \end{equation}
    for all $x\in B(z,r)$, where $\omega:=\min_{n=1,\ldots, N}\omega_n$. Moreover, in this case
    \begin{equation}\label{eq:ESA:ineq4}
		\Theta(x)
    \leq \frac{m\kappa^2}{\omega\delta^2 }.
		\end{equation}
  \end{enumerate}
\end{theorem}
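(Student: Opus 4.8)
The plan is to establish the three parts in order, the heart of the argument being part (i) and, within it, the appearance of the exact coefficient $\Theta$ in \eqref{eq:ESA:ineq1}. For the fixed point set, since $\lambda\sigma(x)>0$ we have $T_{\lambda\sigma}x=x$ iff $Tx=x$, so $\fix T_{\lambda\sigma}=\fix T$; and $\fix T=C$ follows from Theorem \ref{th:SQNE2}, since each $Q_n$ is a composition of cutters with $\fix Q_n=\bigcap_{j\in J_n}\fix U_j$, whence the convex combination $T$ has $\fix T=\bigcap_n\fix Q_n=\bigcap_{i\in I}\fix U_i=C$ because $I=\bigcup_n J_n$. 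Abbreviate $s:=\sigma(x)$, $t:=\tau(x)$, $D:=\|Tx-x\|^2$, $G:=\langle z-x,Tx-x\rangle$, $A:=\sum_n\omega_n\|Q_nx-x\|^2$ and $B:=\sum_n\omega_n\sum_l\|Q_{n,l}x-Q_{n,l-1}x\|^2$, so that the left-hand side of \eqref{eq:ESA:ineq1} is exactly $\Theta(x)B$. Applying \eqref{th:SQNE2:ineq2} to each $Q_n$ and averaging with the weights $\omega_n$ gives $G\ge\tfrac12(A+B)$, while the definition \eqref{eq:ESA:tau} of $\tau$ reads precisely $A+B=\tfrac{m+1}{m}\,tD$, hence $G\ge\tfrac{(m+1)t}{2m}D$. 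Expanding $\|T_{\lambda\sigma}x-z\|^2=\|x-z\|^2-2\lambda sG+\lambda^2s^2D$ then yields
\[
\|x-z\|^2-\|T_{\lambda\sigma}x-z\|^2\ \ge\ \lambda sD\Big(\tfrac{(m+1)t}{m}-\lambda s\Big),
\]
where the parenthetical factor is positive because $\lambda<1+\tfrac1m$ and $s\le t$.

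The non-obvious step, which I expect to be the main obstacle, is to convert this $D$-bound into the claimed $B$-bound with exactly the coefficient $\Theta$. Here I would invoke Jensen's inequality $D=\|\sum_n\omega_n(Q_nx-x)\|^2\le\sum_n\omega_n\|Q_nx-x\|^2=A$; combining $D\le A$ with $A=\tfrac{(m+1)t}{m}D-B$ produces the reverse estimate $D\ge\tfrac{m}{(m+1)t-m}B$, the denominator being positive since $t\ge1$. Substituting this lower bound for $D$ and simplifying gives precisely $\Theta(x)B$, matching \eqref{eq:ESA:Theta} after clearing $\tau$. The $\rho$-SQNE claim is easier: feeding $G\ge\tfrac{(m+1)t}{2m}D$ into the same expansion and using $\|T_{\lambda\sigma}x-x\|^2=\lambda^2s^2D$ reduces the required inequality to $\tfrac{(m+1)t}{m}\ge(1+\rho)\lambda s=\tfrac{m+1}{m}s$, i.e.\ to $t\ge s$, which is the hypothesis $\sigma\le\tau$; positivity of $\rho$ is immediate from $\lambda<1+\tfrac1m$. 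Finally, the lower bound $\Theta(x)\ge\lambda(1-\lambda\tfrac{m}{1+m})$ I would obtain by viewing $\Theta$ as a concave quadratic in $s$ on $[1,t]$ and checking the two endpoints $s=1$ and $s=t$, each reducing to an elementary quadratic inequality in $t$ valid for all $t\ge1$.

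For part (ii), let $i^\ast$ attain $\max_i d(x,\fix U_i)$; since $I=\bigcup_nJ_n$, the index $i^\ast$ occurs in some string $J_{n^\ast}$ at some position $l^\ast$, so $Q_{n^\ast,l^\ast}x=U_{i^\ast}(Q_{n^\ast,l^\ast-1}x)$. Because each $U_i$ is a cutter, hence quasi-nonexpansive with $C\subseteq\fix U_i$, every partial product satisfies $\|Q_{n^\ast,l}x-z\|\le\|x-z\|\le r$, so all these points lie in $B(z,r)$ and the $\delta_{i^\ast}$-linear regularity of $U_{i^\ast}$ applies at $Q_{n^\ast,l^\ast-1}x$, giving $\delta\,d(Q_{n^\ast,l^\ast-1}x,\fix U_{i^\ast})\le\|Q_{n^\ast,l^\ast}x-Q_{n^\ast,l^\ast-1}x\|$. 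A telescoping triangle inequality along the string, together with $\delta\le1$, then bounds $d(x,\fix U_{i^\ast})$ by $\tfrac1\delta\sum_{l=1}^{l^\ast}\|Q_{n^\ast,l}x-Q_{n^\ast,l-1}x\|$; squaring, applying Cauchy--Schwarz to the $l^\ast\le m_{n^\ast}$ terms, and discarding the remaining strings yields \eqref{eq:ESA:ineq2}.

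For part (iii) I would combine (i) and (ii) with $z=P_Cx$, so $\|x-z\|=d(x,C)$. The quasi-nonexpansivity of $T_{\lambda\sigma}$ and the triangle inequality give the upper estimate $\|x-z\|^2-\|T_{\lambda\sigma}x-z\|^2\le2\|T_{\lambda\sigma}x-x\|\,d(x,C)$, while \eqref{eq:ESA:ineq2}, the bound $\sum_n\sum_l m_n\|\cdots\|^2\le m\sum_n\sum_l\|\cdots\|^2$, the weight $\omega$, and the $\kappa$-linear regularity $\max_i d(x,\fix U_i)\ge\tfrac1\kappa d(x,C)$ together give $B\ge\tfrac{\omega\delta^2}{m\kappa^2}d^2(x,C)$. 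Inserting both facts into \eqref{eq:ESA:ineq1} and cancelling one factor of $d(x,C)$ produces \eqref{eq:ESA:ineq3}. For \eqref{eq:ESA:ineq4}, the trivial bound $\Theta(x)B\le\|x-z\|^2=d^2(x,C)$ combined with the same lower bound on $B$ gives $\Theta(x)\le\tfrac{m\kappa^2}{\omega\delta^2}$ whenever $d(x,C)>0$; for $x\in C$ one has $\Theta(x)=\lambda(m+1-m\lambda)\le\tfrac{(m+1)^2}{4m}\le m\le\tfrac{m\kappa^2}{\omega\delta^2}$, using $\omega\le1\le\kappa$ and $\delta\le1$.
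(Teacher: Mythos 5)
Your proposal is correct and follows essentially the same route as the paper's proof: the string inequality \eqref{th:SQNE2:ineq2} averaged over $n$, the identity $A+B=\frac{m+1}{m}\tau(x)\|Tx-x\|^2$ coming from the definition of $\tau$, the convexity bound $\|Tx-x\|^2\le A$ to trade the $\|Tx-x\|^2$-estimate for the $B$-estimate with coefficient $\Theta(x)$, and the same arguments for parts (ii) and (iii). The only differences are cosmetic: the paper packages part (i) via the auxiliary operator $T_\tau$ (shown to be $\frac1m$-SQNE) and Corollary \ref{th:SQNElambda} rather than expanding directly, and you additionally verify the stated lower bound on $\Theta$ (by concavity in $\sigma$), which the paper asserts without proof.
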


\begin{remark}\label{rem:tau}
  Observe that we can always choose $\sigma(x)$ such that $1\leq \sigma (x)\leq\tau(x)$ since $\tau(x)\geq 1$. Indeed, by the convexity of $\|\cdot\|^2$, we have
  \begin{equation} \label{eq:rem:tau:1}
    \|Tx-x\|^2 \leq \sum_{n=1}^{N}\omega_n\|Q_nx-x\|^2.
  \end{equation}
  Moreover, if $m_n\geq 2$, then
  \begin{equation} \label{eq:rem:tau:2}
    \|Q_nx-x\|^2
    \leq \left( 1\cdot \sum_{l=1}^{m_n}\|Q_{n,l}x-Q_{n,l-1}x\|\right)^2
    \leq m_n\sum_{l=1}^{m_n} \|Q_{n,l}x-Q_{n,l-1}x\|^2,
  \end{equation}
   which follows from the Cauchy-Schwarz inequality in $\mathbb{R}^{m_n}$. On the other hand, \eqref{eq:rem:tau:2} becomes an equality if $m_n=1$ and then it is equal to $\|U_ix-x\|^2$ for some $i\in I$. Consequently, $\tau(x)$ has to be greater than or equal to one.
\end{remark}

\begin{proof}[Proof of Theorem \ref{th:ESA}]
It is not difficult to see that, by Theorem \ref{th:SQNE2}, $C=\fix T=\fix T_{\lambda\sigma}$. From now on we assume that $x\in\mathcal H\setminus C$ and $z\in C$. Define $\alpha(x):=\lambda\sigma(x)/\tau(x)$, and observe that $0<\alpha(x)\leq 1+\frac 1 m$ and
\begin{equation}\label{}
  T_{\lambda\sigma} x=x+\lambda\frac{\sigma(x)}{\tau(x)}(T_\tau x-x)=(T_\tau)_{\alpha} x.
\end{equation}

\textit{Part (i).} In order to show that $T_{\lambda\sigma}$ is $\rho$-SQNE, it suffices in view of the above lines and Corollary \ref{th:SQNElambda}, to show that $T_\tau$ is $1/m$-SQNE. By Theorem \ref{th:SQNE2}, we have
\begin{equation}\label{pr:th:ESA:s1:productIneq1}
  \langle z-x, Q_n x-x \rangle
  \geq \frac 1 2 \|Q_nx-x\|^2 + \frac 1 2 \sum_{l=1}^{m_n} \|Q_{n,l}x-Q_{n,l-1}x\|^2.
\end{equation}
Note that \eqref{pr:th:ESA:s1:productIneq1} also holds true for the case $m_n=1$; compare with Theorem \ref{th:SQNE}(iii). Consequently, we have
\begin{align}\label{pr:th:ESA:s1:1} \nonumber
  \langle z-x, T_\tau x-x \rangle
  & = \tau(x) \langle z-x, T x-x \rangle
  =\tau(x)\sum_{n=1}^N\omega_n \langle z-x, Q_n x-x \rangle\\ \nonumber
  & \geq  \frac{\tau(x)}{2}\sum_{n=1}^N\omega_n \left( \|Q_nx-x\|^2 + \sum_{l=1}^{m_n} \|Q_{n,l}x-Q_{n,l-1}x\|^2\right)\\
  & = \frac{m+1}{2m}\tau^2(x)\|Tx-x\|^2
  = \frac{1+\frac 1 m}{2}\|T_\tau x-x\|^2,
\end{align}
which, again by Theorem \ref{th:SQNE} (iii), shows that $T_\tau$ is $(1/m)$-SQNE.

Now we show that inequality \eqref{eq:ESA:ineq1} holds true. By Corollary \ref{th:SQNElambda} applied to $T_{\lambda\sigma}=(T_\tau)_\alpha$, we obtain
\begin{align}\label{pr:th:ESA:s1:2}\nonumber
  \|T_{\lambda\sigma} x-z\|^2 &\leq \|x-z\|^2
  -\left(\frac 1 {m\alpha(x)} + \frac{1-\alpha(x)}{\alpha(x)}\right)
  \|T_{\lambda\sigma} x- x\|^2\\ \nonumber
  & = \|x-z\|^2
  -\lambda\sigma(x)\frac{1+m-m\alpha(x)}{m} \tau (x)\|T x- x\|^2 \\ \nonumber
  & = \|x-z\|^2
  -\lambda\sigma(x)\frac{1+m-m\alpha(x)}{m+1} \left(\sum_{n=1}^{N} \omega_n \left\|Q_nx-x \right\|^2 \right.\\
  & \quad+\left. \sum_{n=1}^{N} \omega_n \sum_{l=1}^{m_n}\|Q_{n,l}x-Q_{n,l-1}x\|^2\right),
\end{align}
which also holds true if $m_n=1$ for some $n$. On the other hand, by the convexity of $\|\cdot\|^2$ and the definition of $\alpha$ and $\tau$, we have
\begin{align}\label{pr:th:ESA:s1:3}\nonumber
  \sum_{n=1}^{N} & \omega_n \|Q_nx-x\|^2
  \geq \|Tx-x\|^2
  = \frac{\alpha(x)}{\lambda\sigma(x)}\tau(x)\|Tx-x\|^2\\
  & = \frac{m\alpha(x)}{(m+1)\lambda\sigma(x)}
  \left(\sum_{n=1}^{N} \omega_n \left\|Q_nx-x \right\|^2
  + \sum_{n=1}^{N} \omega_n \sum_{l=1}^{m_n}\|Q_{n,l}x-Q_{n,l-1}x\|^2\right),
\end{align}
which, after rearranging terms, leads us to the following estimate:
\begin{equation}\label{pr:th:ESA:s1:4}
  \sum_{n=1}^{N}\omega_n\|Q_nx-x\|^2 \geq \frac{m\alpha(x)}{(m+1)\lambda\sigma(x)-m\alpha(x)}
  \sum_{n=1}^{N} \omega_n \sum_{l=1}^{m_n}\|Q_{n,l}x-Q_{n,l-1}x\|^2.
\end{equation}
Using \eqref{pr:th:ESA:s1:2}, \eqref{pr:th:ESA:s1:4} and noticing that
\begin{equation}\label{pr:th:ESA:s1:5}
  \frac{1+m-m\alpha(x)}{m+1}\left(1+ \frac{m\alpha(x)}{(m+1)\lambda\sigma(x)-m\alpha(x)}\right)
  =  \frac{\Theta(x)}{\lambda \sigma(x)},
\end{equation}
we arrive at \eqref{eq:ESA:ineq1}.

\textit{Part (ii).} In order to show \eqref{eq:ESA:ineq2}, assume, in addition, that $x\in B(z,r)$. Given $i\in I$, choose $n\in\{1,\ldots,N\}$ and $1\leq p\leq m_n$, so that $i\in J_n$ and $Q_{n,p}=U_iQ_{n,p-1}$. Note that the operators $U_i$ map $B(z,r)$ into $B(z,r)$ since
  \begin{equation}\label{pr:th:ESA:s2:1}
    \|U_ix-z\| \leq \|x-z\| \leq r
  \end{equation}
  because each $U_i$ is quasi-nonexpansive. The same applies to each $Q_{n,l}$. Then, using the notation $C_i=\fix U_i$, we get

  \begin{align}\label{pr:th:ESA:s2:2}\nonumber
    d(x,C_i) & \leq \|x-P_{C_i}Q_{n,p-1}x\| \leq \|x-Q_{n,p-1}x\|+\|Q_{n,p-1}x-P_{C_i}Q_{n,p-1}\| \\
    & \leq \sum_{l=1}^{p-1} \|Q_{n,l}x-Q_{n,l-1}x\| + \frac{1}{\delta_{i}} \|U_iQ_{n,p-1}x-Q_{n,p-1}x\|
  \end{align}
  because $U_i$ is $\delta_i$-linearly regular over $B(z,r)$ and $\|Q_{n,p-1}x-P_{C_i}Q_{n,p-1}x\|= d(Q_{n,p-1}x, C_i)$. Since $\delta\leq \delta_i\leq1$ (compare with Remark \ref{rem:delta}) we obtain
  \begin{equation}
    d(x,C_i) \leq \frac{1}{\delta} \sum_{l=1}^{p} \|Q_{n,l}x-Q_{n,l-1}x\| \leq \sqrt{\frac{m_n}{\delta^2} \sum_{l=1}^{m_n} \|Q_{n,l}x-Q_{n,l-1}x\|^2},
  \end{equation}
  where the last inequality follows from the Cauchy-Schwarz inequality in $\mathbb{R}^{m_n}$. This shows that \eqref{eq:ESA:ineq2} holds true, which completes the proof of part (ii), because $i\in I$ was arbitrary.

  \textit{Part (iii).} Using \eqref{eq:ESA:ineq1}--\eqref{eq:ESA:ineq2}, we conclude that
  \begin{equation}\label{pr:th:ESA:s3:1}
    \Theta(x) \frac{\omega\delta^2}{m}\max_{i \in I}d(x,C_i)^2
    \leq \|x-z\|^2-\|T_{\lambda\sigma} x-z\|^2.
  \end{equation}
  Next, using the Cauchy-Schwarz inequality, we obtain
    \begin{align}\label{pr:th:ESA:s3:2}
    \nonumber
    \|T_{\lambda\sigma} x-z\|^2 &
    =\|T_{\lambda\sigma} x-x+x-z\|^2
    = \|T_{\lambda\sigma} x-x\|^2 - 2\langle T_{\lambda\sigma} x-x,x-z\rangle + \|x-z\|^2 \\
    \nonumber
    & \geq \|T_{\lambda\sigma} x-x\|^2 - 2\|T_{\lambda\sigma} x-x\|\|x-z\| + \|x-z\|^2 \\
    & \geq - 2\|T_{\lambda\sigma} x-x\|\|x-z\| + \|x-z\|^2,
  \end{align}
  which, when combined with \eqref{pr:th:ESA:s3:1}, and by setting $z=P_Cx$, leads to
  \begin{equation} \label{pr:th:ESA:s3:3}
    \Theta(x) \frac{\omega\delta^2}{2m}\max_{i \in I}d(x,C_i)^2
    \leq \|T_{\lambda\sigma} x-x\| d(x,C).
  \end{equation}
  On the other hand, since the family $\{C_i \mid i \in I\}$ is $\kappa$-linearly regular over $B(z,r)$, we obtain

  \begin{equation} \label{pr:th:ESA:s3:4}
  \frac 1 {\kappa^2} d^2(x,C)\leq \max_{i\in I} d^2(x,C_i),
  \end{equation}
  which, by \eqref{pr:th:ESA:s3:3}, proves \eqref{eq:ESA:ineq3}.

  In order to show \eqref{eq:ESA:ineq4}, it suffices to use \eqref{pr:th:ESA:s3:1} with $z=P_Cx$ and \eqref{pr:th:ESA:s3:4}, which leads to
  \begin{equation}\label{pr:th:ESA:s4:1}
    \Theta(x) \frac{\omega\delta^2}{m \kappa^2} d^2(x,C)
    \leq d^2(x,C).
  \end{equation}
  This completes the proof.
\end{proof}

\begin{remark}
  Observe that $\sigma\colon \mathcal H \to (0,\infty)$, defined by
  \begin{equation}
  \sigma(x):=
    \begin{cases}  \displaystyle
      \frac
      {\sum_{n=1}^{N} \omega_n \left\|Q_nx-x \right\|^2}
      {\left\|T x-x\right\|^{2}}
      & \text{if}\ x\notin C\\
      \ 1 & \text{if}\ x\in C,
    \end{cases}
    \end{equation}
  satisfies the inequalities $1\leq \sigma(x)\leq \tau(x)$.
\end{remark}

A direct application of Theorem \ref{th:ESA} to the extrapolated simultaneous cutter with $m=1$ leads to inequalities involving
\begin{equation}\label{}
  \Theta(x)=\lambda \sigma(x)\frac{2-\lambda \frac{\sigma(x)}{\tau(x)}}  {2-\frac{1}{\tau(x)}}
  \geq \frac 1 2 \lambda (2-\lambda).
\end{equation}
 Nevertheless, by slightly adjusting the proof of Theorem \ref{th:ESA}, we can replace the above $\Theta(x)$ by only $\lambda (2-\lambda)\sigma(x)$.

\begin{corollary}[Extrapolated Simultaneous Cutter]\label{th:ESC}
  Let $U_i\colon\mathcal{H}\to\mathcal{H}$ be a cutter, $i\in I:=\{1,\ldots,M\}$, such that $C:=\bigcap_{i\in I} \fix U_i \neq \emptyset$. Let $T$ be the simultaneous cutter operator defined by
  \begin{equation}\label{th:ESC:U}
    T:=\sum_{i=1}^{M} \omega_i U_i,
  \end{equation}
  where $\omega_i\in(0,1)$ with $\sum_{i=1}^{M}\omega_i=1$. Moreover, let $T_{\lambda\sigma}\colon\mathcal H \to \mathcal H$ be defined by
  \begin{equation}\label{eq:ESC:Usigma}
  T_{\lambda\sigma} x:= x + \lambda\sigma(x)(Tx-x),
  \end{equation}
  where the relaxation parameter $\lambda\in (0,2)$, the extrapolation functional $\sigma\colon\mathcal H \to [1,\infty)$ satisfies $1\leq \sigma(x) \leq \tau(x)$ for all $x\in\mathcal H$ and
  \begin{equation}\label{eq:ESC:tau}
    \tau(x):=
    \begin{cases}  \displaystyle
      \frac
      {
      \sum_{i=1}^{M} \omega_i \left\|U_ix-x \right\|^2
      }
      {
      \left\|T x-x\right\|^{2}
      } & \text{if}\ x\notin C\\
      \ 1 & \text{otherwise}.
    \end{cases}
  \end{equation}
  Then the following statements hold:
  \begin{enumerate}[(i)]
  \item $T_{\lambda\sigma}$ is $\lambda(2-\lambda)$-SQNE, where $\fix T_{\lambda\sigma}=C$. Moreover, for all $x\in \mathcal H$ and $z\in C$, we have
      \begin{equation}\label{eq:ESC:ineq1}
        \lambda(2-\lambda)\sigma (x)\sum_{i=1}^{M}\omega_i \|U_ix-x\|^2
        \leq  \|x-z\|^2 - \|T_{\lambda\sigma} x- z\|^2.
      \end{equation}
  \item If for every $i\in I$, the operator $U_i$ is $\delta_i$-linearly regular, $\delta_i\in(0,1]$, on the ball $B(z,r)$ for some $z\in C$ and some $r>0$, then for all $x\in B(z,r)$, we have
      \begin{equation}\label{eq:ESC:ineq2}
        \delta^2 \max_{i\in I} d^2(x,\fix U_i) \leq \sum_{i=1}^{M} \|U_ix-x\|^2,
      \end{equation}
      where $\delta:=\min_{i\in I} \delta_i$.

  \item If, in addition, the family $\{\fix U_i\mid i\in I\}$ is $\kappa$-linearly regular over $B(z,r)$, then $T_{\lambda\sigma}$ also satisfies
    \begin{equation}\label{eq:ESC:ineq3}
      \lambda(2-\lambda)\sigma(x) \frac{\omega\delta^2}{2\kappa^2 }  d\big(x,C\big) \leq
      \|T_{\lambda\sigma} x-x\|
    \end{equation}
    for all $x\in B(z,r)$, where $\omega:=\min_{i\in I}\omega_i$. Moreover, in this case
		\begin{equation}\label{eq:ESC:ineq4}
		\sigma(x) \leq \frac{\kappa^2}{\lambda(2-\lambda)\omega\delta^2}.
		\end{equation}
  \end{enumerate}
\end{corollary}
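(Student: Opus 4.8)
The statement is the special case $m=1$ of Theorem~\ref{th:ESA}: every string $J_n$ is a singleton, so $N=M$, each $Q_n$ reduces to a single cutter $U_i$, the averaged operator is $T=\sum_{i}\omega_i U_i$, and \eqref{eq:ESC:tau} is exactly the $m=1$ form of \eqref{eq:ESA:tau} (the inner telescoping sum collapses to $\|U_ix-x\|^2$, so the prefactor $\tfrac{m}{m+1}=\tfrac12$ cancels the factor $2$ coming from the two coinciding terms). Consequently parts (ii) and (iii) will be immediate specializations of the corresponding parts of Theorem~\ref{th:ESA}, and the only genuine content is to prove part (i) with the sharper coefficient $\lambda(2-\lambda)\sigma(x)$ in place of the $\tfrac12\lambda(2-\lambda)$ that a verbatim application of \eqref{eq:ESA:ineq1} would yield. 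This is precisely the ``slight adjustment'' announced in the paragraph preceding the corollary.

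For part (i), the plan is first to record $\fix T_{\lambda\sigma}=\fix T=C$ from Theorem~\ref{th:SQNE2}(i). The crucial step is to avoid routing the estimate through the generalized relaxation $(T_\tau)_\alpha$: that detour forces one to re-split $\|T_\tau x-x\|^2$ into the $\|Q_nx-x\|^2$ part and the telescoping part, which is exactly what introduces the spurious denominator $2-\tfrac1{\tau(x)}$ in $\Theta(x)$ and costs the factor $2$. Instead I would expand directly, $\|T_{\lambda\sigma}x-z\|^2=\|x-z\|^2+2\lambda\sigma(x)\langle Tx-x,x-z\rangle+\lambda^2\sigma^2(x)\|Tx-x\|^2$, insert the cutter inequality \eqref{th:SQNE2:ineq1} in the form $\langle Tx-x,x-z\rangle\le-\sum_i\omega_i\|U_ix-x\|^2$, and substitute the exact identity $\|Tx-x\|^2=\tau(x)^{-1}\sum_i\omega_i\|U_ix-x\|^2$ supplied by \eqref{eq:ESC:tau} for $x\notin C$. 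This collapses the right-hand side to $\|x-z\|^2-\lambda\sigma(x)\bigl(2-\lambda\tfrac{\sigma(x)}{\tau(x)}\bigr)\sum_i\omega_i\|U_ix-x\|^2$; since $\sigma(x)\le\tau(x)$, $\sigma(x)\ge1$ and $\lambda<2$, the coefficient is at least $\lambda(2-\lambda)\sigma(x)$, giving \eqref{eq:ESC:ineq1}, and the $\lambda(2-\lambda)$-SQNE property is then read off from the same estimate.

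For part (ii) I would repeat the argument of Theorem~\ref{th:ESA}(ii), which here simplifies since $m_n=1$ and there are no telescoping terms: each $U_i$ maps $B(z,r)$ into itself by quasi-nonexpansiveness and is $\delta_i$-linearly regular there, so $\delta^2 d^2(x,\fix U_i)\le\delta_i^2 d^2(x,\fix U_i)\le\|U_ix-x\|^2\le\sum_j\|U_jx-x\|^2$, using $\delta\le\delta_i\le1$ from Remark~\ref{rem:delta}; taking the maximum over $i$ yields \eqref{eq:ESC:ineq2}. For part (iii) I would chain (i) and (ii) exactly as in the proof of Theorem~\ref{th:ESA}(iii): bound $\sum_i\omega_i\|U_ix-x\|^2\ge\omega\delta^2\max_i d^2(x,\fix U_i)$, apply the Cauchy--Schwarz estimate \eqref{pr:th:ESA:s3:2}, set $z=P_Cx$, and finish with the $\kappa$-linear regularity inequality $\max_i d^2(x,C_i)\ge\kappa^{-2}d^2(x,C)$ to obtain \eqref{eq:ESC:ineq3}; discarding the nonnegative term $\|T_{\lambda\sigma}x-z\|^2$ then gives \eqref{eq:ESC:ineq4}.

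The main — and essentially the only — obstacle is the bookkeeping in part (i): one must resist quoting \eqref{eq:ESA:ineq1} with $m=1$ and instead carry out the direct expansion above, so that the extraneous denominator $2-\tfrac1{\tau(x)}$ never appears and the full constant $\lambda(2-\lambda)\sigma(x)$ is retained. Everything else is a transparent $m=1$ specialization of Theorem~\ref{th:ESA}.
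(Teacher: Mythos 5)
Your proposal is correct and follows essentially the same route as the paper: the paper's own proof likewise reduces everything to the single modified estimate $\|T_{\lambda\sigma}x-z\|^2\le\|x-z\|^2-\lambda\sigma(x)\bigl(2-\alpha(x)\bigr)\tau(x)\|Tx-x\|^2$ with $\alpha(x)=\lambda\sigma(x)/\tau(x)$ and $\tau(x)\|Tx-x\|^2=\sum_i\omega_i\|U_ix-x\|^2$, which is exactly your direct expansion with Corollary~\ref{th:SQNElambda} folded in, and it treats (ii) and (iii) as the same verbatim specializations of Theorem~\ref{th:ESA} that you describe. The one caveat --- shared equally by the paper --- is that this estimate yields \eqref{eq:ESC:ineq1} but, as a coefficient of $\|T_{\lambda\sigma}x-x\|^2=\lambda^2\sigma^2(x)\|Tx-x\|^2$, it only gives the SQNE modulus $(2-\alpha(x))/\alpha(x)\ge(2-\lambda)/\lambda$, so the asserted $\lambda(2-\lambda)$-SQNE property cannot simply be ``read off'' when $\lambda>1$ (already the relaxed projection $(P_C)_{3/2}$ on $\mathbb{R}$ is only $\tfrac13$-SQNE, not $\tfrac34$-SQNE).
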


\begin{proof}
  Following the proof of Theorem \ref{th:ESA}, one can observe that the most significant change corresponds to \eqref{pr:th:ESA:s1:2}, which takes the following form:
  \begin{align}\label{pr:th:ESC:1}\nonumber
    \|T_{\lambda\sigma} x-z\|^2 & \leq \|x-z\|^2
    -\lambda\sigma(x)\big(2-\alpha(x)\big) \tau (x)\|T x- x\|^2 \\
    & \leq \|x-z\|^2
    -\lambda(2-\lambda)\sigma(x) \sum_{i=1}^M\omega_i\|U_ix-x\|^2.
  \end{align}
  Thus we have shown \eqref{eq:ESC:ineq1}. Inequality \eqref{eq:ESC:ineq2} follows trivially from the linear regularity of each $U_i$. By combining \eqref{eq:ESC:ineq1} and \eqref{eq:ESC:ineq2}, we arrive at
  \begin{equation}\label{pr:th:ESC:2}
    \lambda(2-\lambda) \sigma(x) \omega\delta^2\max_{i\in I}d(x,C_i)^2
    \leq \|x-z\|^2-\|T_{\lambda\sigma} x-z\|^2.
  \end{equation}
  The rest of the proof remains the same as in the proof of Theorem \ref{th:ESA} with \eqref{pr:th:ESA:s3:1} replaced by \eqref{pr:th:ESC:2}.
\end{proof}

\begin{corollary}[Extrapolated Cyclic Cutter]\label{th:ECC}
  Let $U_i\colon\mathcal{H}\to\mathcal{H}$ be a cutter, $i\in I:=\{1,\ldots,M\}$, such that $C:=\bigcap_{i\in I} \fix U_i \neq \emptyset$. Let $T$ be the cyclic cutter operator defined by
  \begin{equation}\label{th:ECC:U}
    T:=U_M\ldots U_1.
  \end{equation}
  Moreover, let $T_{\lambda\sigma}\colon\mathcal H \to \mathcal H$ be defined by
  \begin{equation}\label{eq:ECC:Usigma}
  T_{\lambda\sigma} x:= x + \lambda\sigma(x)(Tx-x),
  \end{equation}
  where the relaxation parameter $\lambda\in (0,1+\frac 1 M)$, the extrapolation functional $\sigma\colon\mathcal H \to [1,\infty)$ satisfies $1\leq \sigma(x) \leq \tau(x)$ for all $x\in\mathcal H$ and
  \begin{equation}\label{eq:ECC:tau}
    \tau(x):=
    \begin{cases}  \displaystyle
      \frac{M}{M+1} \left( 1+
      \frac
      {
      \sum_{i=1}^{M} \left\|U_i\ldots U_1 x-U_{i-1}\ldots U_1 x \right\|^2
      }
      {
      \left\|T x-x\right\|^{2}
      }
      \right)
      & \text{if}\ x\notin C\\
      \ 1 & \text{otherwise},
    \end{cases}
  \end{equation}
  where $U_{i-1}\ldots U_1 x:=x$ for $i=1$. Then the following statements hold:
  \begin{enumerate}[(i)]
  \item $T_{\lambda\sigma}$ is $\rho$-SQNE, where $\fix T_{\lambda\sigma}=C$ and
  \begin{equation}\label{eq:ECC:rho}
    \rho:=\frac 1 {\lambda M} + \frac{1-\lambda}{\lambda}>0.
  \end{equation}
  Moreover, for all $x\in \mathcal H$ and $z\in C$, we have
    \begin{equation}\label{eq:ECC:ineq1}
      \Theta(x)\sum_{i=1}^{M} \left\|U_i\ldots U_1 x-U_{i-1}\ldots U_1 x \right\|^2
      \leq  \|x-z\|^2 - \|T_{\lambda\sigma} x- z\|^2,
    \end{equation}
  where
    \begin{equation}\label{eq:ECC:Delta}
      \Theta(x):=
      \lambda\sigma (x)
      \frac{1+M-M\lambda\frac{\sigma(x)}{\tau(x)}}
           {1+M-\frac{M}{\tau(x)}}
      \geq \lambda \left(1-\lambda \frac {M} {1+M}\right)>0.
    \end{equation}
  \item If for every $i\in I$, the operator $U_i$ is $\delta_i$-linearly regular, $\delta_i\in(0,1]$, on the ball $B(z,r)$ for some $z\in C$ and some $r>0$, then for all $x\in B(z,r)$, we have
      \begin{equation}\label{eq:ECC:ineq2}
        \frac{\delta^2}M \max_{i\in I} d^2(x,\fix U_i)
        \leq \sum_{i=1}^{M} \left\|U_i\ldots U_1 x-U_{i-1}\ldots U_1 x \right\|^2,
      \end{equation}
      where $\delta:=\min_{i\in I} \delta_i$.

  \item If, in addition, the family $\{\fix U_i\mid i\in I\}$ is $\kappa$-linearly regular over $B(z,r)$, then $T_{\lambda\sigma}$ also satisfies
    \begin{equation}\label{eq:ECC:ineq3}
      \Theta(x) \frac{\delta^2}{2M\kappa^2 }  d\big(x,C\big) \leq
      \|T_{\lambda\sigma} x-x\|
    \end{equation}
    for all $x\in B(z,R)$. Moreover, in this case,
		\begin{equation}\label{eq:ECC:ineq4}
		\Theta(x) \leq \frac{M\kappa^2}{\delta^2}.
		\end{equation}
  \end{enumerate}
\end{corollary}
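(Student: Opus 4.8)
The plan is to derive Corollary \ref{th:ECC} as the special case of Theorem \ref{th:ESA} in which the string averaging operator degenerates to a single string. Concretely, I would take $N=1$, $\omega_1=1$ and $J_1:=(1,2,\ldots,M)$, so that the unique string covers all of $I$ in the natural order and $Q_1=U_M\ldots U_1=T$. With this choice one has $m_1=|J_1|=M$, hence $m=\max_n m_n=M$ and $\omega=\min_n\omega_n=1$, while the partial products become $Q_{1,l}=U_l\ldots U_1$ for $1\leq l\leq M$ and $Q_{1,0}=\id$. All the hypotheses of Theorem \ref{th:ESA} are then met: each $U_i$ is a cutter with common fixed point set $C\neq\emptyset$, the relaxation parameter ranges over $(0,1+\frac1m)=(0,1+\frac1M)$, and $\omega_1=1\in(0,1]$.

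First I would check that the extrapolation bound $\tau$ specializes correctly. Substituting $N=1$, $\omega_1=1$ and $m=M$ into \eqref{eq:ESA:tau}, the numerator collapses to $\frac{M}{M+1}(\|Tx-x\|^2+\sum_{l=1}^M\|U_l\ldots U_1x-U_{l-1}\ldots U_1x\|^2)$, which after dividing by $\|Tx-x\|^2$ is precisely the functional $\tau(x)$ of \eqref{eq:ECC:tau}. Likewise, setting $m=M$ in \eqref{eq:ESA:Theta} reproduces verbatim the quantity $\Theta(x)$ of \eqref{eq:ECC:Delta}, together with its lower bound $\lambda(1-\lambda\frac{M}{1+M})$.

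With these identifications in hand, I would verify the three conclusions in turn, each following from the corresponding part of Theorem \ref{th:ESA}. For part (i), the SQNE constant $\rho=\frac{1}{\lambda m}+\frac{1-\lambda}{\lambda}$ of \eqref{eq:ESA:rho} becomes \eqref{eq:ECC:rho} when $m=M$, and inequality \eqref{eq:ESA:ineq1}, having the single summand $n=1$ with $\omega_1=1$, reduces exactly to \eqref{eq:ECC:ineq1}. For part (ii), inequality \eqref{eq:ESA:ineq2} reads $\delta^2\max_i d^2(x,\fix U_i)\leq M\sum_{l=1}^M\|U_l\ldots U_1x-U_{l-1}\ldots U_1x\|^2$ after inserting $N=1$ and $m_1=M$; dividing by $M$ yields \eqref{eq:ECC:ineq2}. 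For part (iii), substituting $\omega=1$ and $m=M$ into the constants of \eqref{eq:ESA:ineq3} and \eqref{eq:ESA:ineq4} gives $\frac{\delta^2}{2M\kappa^2}$ and $\frac{M\kappa^2}{\delta^2}$, which are exactly \eqref{eq:ECC:ineq3} and \eqref{eq:ECC:ineq4}.

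Since every ingredient is a direct translation of already-proved inequalities, I do not expect a genuine analytic obstacle here; the only point requiring attention is bookkeeping, namely matching the abstract partial-product notation $Q_{n,l}$ with the concrete cyclic compositions $U_l\ldots U_1$ and confirming that $\omega=1$ and $m=M$ under the single-string choice. Unlike the simultaneous case of Corollary \ref{th:ESC}, no sharpening of the argument is needed, because the cyclic operator uses the full string structure for which the constant $\Theta(x)$ of Theorem \ref{th:ESA} was designed.
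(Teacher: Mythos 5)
Your proposal is correct and is exactly the paper's own proof: the paper derives Corollary \ref{th:ECC} by applying Theorem \ref{th:ESA} with the single string $J=(1,\ldots,M)$ and $m=M$, and your detailed bookkeeping ($N=1$, $\omega_1=1$, $Q_{1,l}=U_l\ldots U_1$) just makes that one-line specialization explicit. All the substitutions you carry out check out, so nothing further is needed.
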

\begin{proof}
  Apply Theorem \ref{th:ESA} with one string $J=(1,\ldots,M)$ and $m=M$.
\end{proof}

\begin{remark}[Non-extrapolated operators]\label{rem:LR}
  In the case of the non-extrapolated operator, where $\sigma(x)=1$ for all $x \in\mathcal H$, we can simplify the lower bound for $\Theta(x)$ while assuming that $\lambda \leq 1$. Indeed, we have $\Theta(x)\geq \lambda$, which coincides with the additional estimate made for the simultaneous cutter presented in Corollary \ref{th:ESC}. Moreover, by substituting $\lambda=1$, we see that $\Theta(x)=1$ for all $x\in\mathcal H$. This may simplify some of the estimates related to the linear regularity (LR) of operators and thus influence the linear rate of convergence of some iterative methods.

  In particular, by Theorem \ref{th:ESA} (iii), the string averaging cutter $T$ defined in \eqref{th:ESA:U} is linearly regular (LR) over the ball $B(z,r)$ and satisfies
  \begin{equation}\label{eq:rem:LR}
    \frac{\omega\delta^2}{2m\kappa^2}  d\big(x,C\big)
      \leq \|T x-x\|
  \end{equation}
  for all $x \in B(z,r)$. The above inequality coincides with \cite[Lemma 8]{BargetzReichZalas2017}. We recall that \cite[Lemma 8]{BargetzReichZalas2017} was established for metric projections only ($U_i=P_{C_i}$) which are 1-LR and therefore $\delta=1$ in \eqref{eq:rem:LR}.

  On the other hand, the LR of the operator $T$ based on general $U_i$'s can be found in \cite[Example 5.7]{CegielskiReichZalas2018}. This result follows from \cite[Corollaries 5.3 and 5.6]{CegielskiReichZalas2018}, where the authors present LR estimates for the non-extrapolated convex combination and product of operators that coincide with \eqref{eq:ESC:ineq3} and \eqref{eq:ECC:ineq3}, respectively. However, this result requires the additional assumption that each subfamily $\{\fix U_j\mid j\in J_n\}$ is LR over $B(z,r)$, $n=1,\ldots, N$. Indeed, by  \cite[Corollary 5.6]{CegielskiReichZalas2018}, the product $Q_n$ is $\delta^2/(2m_n\kappa_n^2)$-LR over $B(z,r)$ due to the $\kappa_n$-LR of the family $\{\fix U_j\mid j\in J_n\}$ over $B(z,r)$. This, by \cite[Corollary 5.3]{CegielskiReichZalas2018} and the assumption that the family $\{\fix U_i\mid i\in I\}$ is $\kappa$-LR over $B(z,r)$, implies that the operator $T$ defined as a convex combination is $\delta_T$-LR over $B(z,r)$, where
  \begin{equation}\label{}
    \delta_T=
    \frac
      {\omega \left(\frac{\delta^2} {2 m \max \kappa_n^2}\right)^2}
      {(2\kappa^2)} > \frac{\omega\delta^2}{2m\kappa^2}.
  \end{equation}
  This reasoning cannot be applied in every case in view of the counterexample provided in \cite{ReichZaslavski2014} according to which it may happen that no subfamily of an LR family of sets is LR. We emphasize at this point that the argument we presented in the proof of Theorem \ref{th:ESA} does not require any additional regularity of subfamilies.
\end{remark}

\section{Extrapolated Iterative Methods}\label{sec:methods}

\begin{theorem}[Extrapolated Dynamic String Averaging Method]\label{th:main}
  Let $U_i\colon\mathcal{H}\to\mathcal{H}$ be a cutter, $i\in I:=\{1,\ldots,M\}$, such that $C:=\bigcap_{i\in I} \fix U_i \neq \emptyset$. Let the sequence $\{x^k\}_{k=0}^{\infty}$ be defined by the following method:
  \begin{equation}\label{th:main:xk}
  x^0\in\mathcal{H} \qquad \text{and} \qquad x^{k+1} := x^k+\lambda_k \sigma_k(x^k)\left(T_k x^{k}-x^k\right),
  \quad k=0,1,2,\ldots,
  \end{equation}
  where $T_k:=\sum_{n=1}^{N_k} \omega_n^k Q_n^k$ is the string averaging cutter operator, $Q_n^k:=\prod_{j\in J_n^k} U_j$ is a product of operators along the string $J_n^k\subseteq I$, $\omega_n^k >0$ satisfy $\sum_{n=1}^{N_k}\omega_n^{k}=1$, $\lambda_k>0 $ and $\sigma_k\colon\mathcal H \to [1,\infty)$.

  Assume that $\omega_n^k\in[\omega,1]$ for some $\omega \in (0,1]$, $\lambda_k \in[\underline \lambda, \overline \lambda]$ for some $\underline{\lambda} \in (0,1]$ and $\overline \lambda \in [1,1+\frac 1 m)$, $\sigma_k(x)\in [1,\tau_k(x)]$ for each $x\in \mathcal H$, where
  \begin{equation}\label{th:main:tau}
    \tau_k(x):=
    \begin{cases}  \displaystyle
      \frac
      {
      \frac{m_k}{m_k+1}\sum\limits_{n=1}^{N} \omega_n^k
        \left(
        \left\|Q_n^kx-x \right\|^2
        + \sum\limits_{l=1}^{m_n^k}\|Q_{n,l}^kx-Q_{n,l-1}^{k}x\|^2 \right)
      }
      {
      \left\|T_k x-x\right\|^{2}
      } & \text{if}\ x\notin \bigcap\limits_{i\in I_k}\fix U_i\\
      \ 1 & \text{otherwise,}
    \end{cases}
  \end{equation}
  $m_n^k:=|J_n^k|$, $m_k:=\max_{1\leq n\leq N_k} m_n^k$, $ m:=\sup_k m_k<\infty$ and $Q_{n,l}^k$ is a product of the first $l$ operators along the string $J_n^k$, $Q_{n,0}^k:=\id$. Moreover, assume that the control $\{I_k\}_{k=0}^\infty$ is $s$-intermittent, that is, there is an integer $s\geq 1$ such that $I = I_k\cup\ldots\cup I_{k+s-1}$ for each $k=0,1,2,\ldots$, where $I_k := \bigcup_{n=1}^{N_k} J_n^k$. We set $r:=d(x^0,C)$ and $B:=B(P_Cx^0, r)$.
  Then the following statements are true:
  \begin{enumerate}[(i)]
  \item If each $U_i$ is weakly regular over $B$, then $\{x^k\}_{k=0}^{\infty}$ converges weakly to some point $x^\infty\in C \cap B$.
  \item If each $U_i$ is regular over $B$ and the family $\{\fix U_i \mid i\in I\}$ is regular over $B$, then the convergence to $x^\infty$ is in norm.
  \item If each $U_i$ is $\delta_i$-linearly regular over $B$ and the family $\{\fix U_i\mid i\in I\}$ is $\kappa$-linearly regular over $B$, then, for all $k=0,1,2,\ldots$, we have
       \begin{equation}\label{th:main:equiv}
        \frac{\delta}{2\kappa}\|x^k-x^\infty\|
        \leq \max_{i\in I}\|U_ix^k-x^k\|
        \leq \|x^k-x^\infty\|,
      \end{equation}
      and
      \begin{equation}\label{th:main:rate}
        \|x^k-x^\infty\|
        \leq 2d(x^0,C)
        \left(
        \sqrt[\leftroot{-1}\uproot{2}\large \displaystyle 2s]
        {
        1-\frac{\omega\rho\delta^2}{s\kappa^2 }
        \min \left\{ \frac 1 {\omega\delta^2},\ \inf_k \Theta_{k} \right\}
        }
        \right)^{k-s+1},
      \end{equation}
      where $\delta := \min_{i\in I}\delta_i$, $\rho:=\frac 1 {m\overline{\lambda }} + \frac{1-\overline{\lambda}}{\overline{\lambda }}$ and
      \begin{equation}\label{th:main:parameters}
        \Theta_k:=
          \lambda_k\sigma_k(x^k)
          \frac{1+m_k-m_k\lambda_k\frac{\sigma_k(x^k)}{\tau_k(x^k)}}
             {1+m_k-\frac{m_k}{\tau_k(x^k)}};
      \end{equation}
      compare with \eqref{eq:ESA:Theta}. In particular, the convergence to $x^\infty$ is $R$-linear.
  \end{enumerate}
\end{theorem}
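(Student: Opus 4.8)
The plan is to reduce everything to the single-step properties recorded in Theorem~\ref{th:ESA} and then to stitch the steps together using the $s$-intermittency of the control and the Fej\'er machinery of Theorem~\ref{th:Fejer}. First I would apply Theorem~\ref{th:ESA}(i) to the $k$-th step operator $T_{\lambda_k\sigma_k}$, whose active block is $I_k$ and whose fixed point set is $\bigcap_{i\in I_k}\fix U_i\supseteq C$. Since $\lambda_k\le\overline\lambda$ and $m_k\le m$, this operator is $\rho_k$-SQNE with $\rho_k\ge\rho=\frac{1}{m\overline\lambda}+\frac{1-\overline\lambda}{\overline\lambda}>0$, so for every $z\in C$ one has the two one-step decrease inequalities
\begin{equation*}
\rho\,\|x^{k+1}-x^k\|^2\le\|x^k-z\|^2-\|x^{k+1}-z\|^2
\end{equation*}
and, by combining \eqref{eq:ESA:ineq1} with \eqref{eq:ESA:ineq2} exactly as in \eqref{pr:th:ESA:s3:1},
\begin{equation*}
\Theta_k\frac{\omega\delta^2}{m}\max_{i\in I_k}d^2(x^k,\fix U_i)\le\|x^k-z\|^2-\|x^{k+1}-z\|^2.
\end{equation*}
Taking $z=P_Cx^0$ gives Fej\'er monotonicity with respect to $C$, hence $\{x^k\}\subseteq B$; summing over $k$ shows both $\|x^{k+1}-x^k\|\to0$ and that the string-increment sums $a_k:=\sum_n\omega_n^k\sum_l\|Q_{n,l}^kx^k-Q_{n,l-1}^kx^k\|^2\to0$. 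I stress that the second decrease inequality uses only the per-operator regularity of the $U_i$ and never the regularity of any subfamily $\{\fix U_i\mid i\in I_k\}$; this is the whole point, and it is what lets the block control $I_k$ vary with $k$.

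For part (i) I would invoke Theorem~\ref{th:Fejer}(i) and show every weak cluster point lies in $C$. Fix $i\in I$ and a weakly convergent subsequence $x^{k_j}\rightharpoonup x^\infty$. By $s$-intermittency choose $k_j\le k_j'<k_j+s$ with $i\in I_{k_j'}$; since $\|x^{k+1}-x^k\|\to0$ we still have $x^{k_j'}\rightharpoonup x^\infty$, and if $U_i$ sits at position $l$ of the string $J_n^{k_j'}$ then $y^j:=Q_{n,l-1}^{k_j'}x^{k_j'}$ lies in $B$, satisfies $\|y^j-x^{k_j'}\|\le\sqrt{(l-1)a_{k_j'}}\to0$ (so $y^j\rightharpoonup x^\infty$), and $\|U_iy^j-y^j\|\le\sqrt{a_{k_j'}}\to0$; weak regularity of $U_i$ over $B$ then yields $x^\infty\in\fix U_i$. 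As $i$ was arbitrary, $x^\infty\in C\cap B$. Part (ii) runs along the same lines with the sequential quantity $d(\cdot,\fix U_i)$ in place of fixed-point membership: the same window construction, applied to the full sequence and now using the regularity of each $U_i$, gives $d(x^k,\fix U_i)\to0$ for every $i\in I$, hence $\max_{i\in I}d(x^k,\fix U_i)\to0$; the regularity of the family $\{\fix U_i\mid i\in I\}$ over $B$ converts this into $d(x^k,C)\to0$, and Theorem~\ref{th:Fejer}(ii) gives norm convergence.

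For part (iii) the assumptions imply those of (ii), so $x^\infty$ exists and, by Theorem~\ref{th:Fejer}(v), $\|x^k-x^\infty\|\le 2d(x^k,C)$. The double estimate \eqref{th:main:equiv} then follows by chaining the cutter inequality $\|U_ix^k-x^k\|\le\|x^k-x^\infty\|$ (right-hand side) with $d(x^k,C)\le\kappa\max_id(x^k,\fix U_i)\le\frac{\kappa}{\delta}\max_i\|U_ix^k-x^k\|$ (left-hand side). The rate \eqref{th:main:rate} is the heart of the matter. Summing the second one-step inequality over a window $j=0,\dots,s-1$ with $z=P_Cx^k$ and setting $\Delta:=\|x^k-z\|^2-\|x^{k+s}-z\|^2$, so that $d^2(x^{k+s},C)\le d^2(x^k,C)-\Delta$, I would prove a per-window contraction $d^2(x^{k+s},C)\le(1-c)\,d^2(x^k,C)$. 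To identify $c$, for each $i\in I$ pick $j_i$ with $i\in I_{k+j_i}$ and compare $d(x^{k+j_i},\fix U_i)$ with $d(x^k,\fix U_i)$ via $\|x^{k+j_i}-x^k\|\le\sqrt{s/\rho}\,\sqrt{\Delta}$ (Cauchy--Schwarz applied to the first one-step inequality). Splitting into the regime where the within-window travel $\sqrt{s/\rho}\,\sqrt\Delta$ exceeds a fixed fraction of $\max_id(x^k,\fix U_i)$, so that $\Delta$ dominates $\tfrac{\rho}{s\kappa^2}d^2(x^k,C)$ through family linear regularity, and the complementary regime where the extremal constraint persists across the window, so that the single term $\Theta_{k+j_i}\tfrac{\omega\delta^2}{m}d^2(x^{k+j_i},\fix U_i)$ dominates $\tfrac{\omega\delta^2}{m\kappa^2}\inf_k\Theta_k\,d^2(x^k,C)$, and then optimizing carefully, yields the stated $c=\frac{\omega\rho\delta^2}{s\kappa^2}\min\{\frac{1}{\omega\delta^2},\inf_k\Theta_k\}$. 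Finally, Theorem~\ref{th:Fejer}(iii) applied to the subsequence $\{x^{ks}\}$ and Theorem~\ref{th:Fejer}(iv) lifting it to the whole sequence deliver \eqref{th:main:rate}.

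The hard part will be exactly this last contraction. The per-step estimate controls only $\max_{i\in I_k}d^2(\cdot,\fix U_i)$, where the maximum runs over the active block and is evaluated at the shifted iterate $x^{k+j}$ rather than at $x^k$; bridging both gaps simultaneously — turning block-wise, shifted information into a genuine contraction of $d(x^k,C)$ — is what forces the two-regime argument and makes both $\rho$ (the SQNE/displacement constant governing how far the iterate can drift across a window) and $\inf_k\Theta_k$ (the extrapolation constant) enter the rate. Keeping the constant sharp enough that it reduces to the known cyclic and simultaneous projection bounds, while never invoking linear regularity of any subfamily, is the delicate bookkeeping that distinguishes this argument from \cite{CegielskiReichZalas2018}.
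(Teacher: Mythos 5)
Your architecture for parts (i) and (ii) and for Step 1 of part (iii) matches the paper's: Fej\'er monotonicity and asymptotic regularity from Theorem~\ref{th:ESA}(i), then a shift along the $s$-window to an iterate whose active block contains $i$, then Theorem~\ref{th:Fejer}(i)--(ii). Where the paper delegates the cluster-point analysis to Lemmas~3.4--3.5 of \cite{ReichZalas2016}, you re-derive them by tracking the intermediate points $Q_{n,l}^k x^k$; this is a legitimate, self-contained substitute (note only that $a_k$ carries the weights $\omega_n^k$, so your bounds on $\|y^j-x^{k_j'}\|$ and $\|U_iy^j-y^j\|$ should each pick up a harmless factor $1/\sqrt{\omega}$).

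The genuine gap is the contraction constant in \eqref{th:main:rate}. The paper does not use the two-regime dichotomy you propose. It writes $d(x^{ks},C_i)\le\sum_{p=ks+1}^{ks+l_k}\|x^p-x^{p-1}\|+d(x^{ks+l_k},C_i)$, applies Cauchy--Schwarz to this sum of $l_k+1\le s$ terms, bounds each travel term by $\tfrac1\rho\bigl(\|x^{p-1}-z\|^2-\|x^p-z\|^2\bigr)$ and the last term by $\tfrac{1}{\omega\rho\delta^2\theta_{ks+l_k}}\bigl(\|x^{ks+l_k}-z\|^2-\|x^{ks+l_k+1}-z\|^2\bigr)$ with $\theta_k:=\min\{\tfrac{1}{\omega\delta^2},\Theta_k\}$. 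The cap at $1/(\omega\delta^2)$ is exactly what guarantees $\tfrac1\rho\le\tfrac{1}{\omega\rho\delta^2\theta_{ks+l_k}}$, so all $l_k+1$ squared terms share a single prefactor and the whole window telescopes to $\tfrac{s}{\omega\rho\delta^2\theta_{ks+l_k}}\bigl(\|x^{ks}-z\|^2-\|x^{(k+1)s}-z\|^2\bigr)$; this is the sole source of the $\min$ in \eqref{th:main:rate}. Your split with a threshold $c_1$ (travel exceeds $c_1\max_i d(x^k,\fix U_i)$ versus not) yields, after family linear regularity and $\tfrac1m\ge\rho$, a per-window factor of the form $\tfrac{\rho}{\kappa^2}\min\bigl\{\tfrac{c_1^2}{s},\,(1-c_1)^2\omega\delta^2\inf_k\Theta_k\bigr\}$, and no choice of $c_1$ matches the stated $\tfrac{\rho}{s\kappa^2}\min\{1,\,\omega\delta^2\inf_k\Theta_k\}$ in both branches at once (you would need $c_1=1$ and $(1-c_1)^2=1/s$ simultaneously). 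So "optimizing carefully" as described delivers $R$-linear convergence but a strictly weaker constant for $s>1$; to obtain the advertised rate, replace the dichotomy by the unified Cauchy--Schwarz/telescoping estimate above.
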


\begin{proof}
  We show that $\{x^k\}_{k=0}^\infty$ is Fej\'{e}r monotone with respect to the set $C$. We use the notation
  \begin{equation}\label{th:main:notation}
    T^k_{\lambda_k\sigma_k}:=\id + \lambda_k\sigma_k(\cdot) \left(T_k -\id\right);
  \end{equation}
  compare with \eqref{eq:ESA:Usigma}. The iterative step of \eqref{th:main:xk} can be rewritten as $x^{k+1}:=T^k_{\lambda_k\sigma_k}x^k$. By Theorem \ref{th:ESA}, each $T^k_{\lambda_k\sigma_k}$ is $\rho_k$-SQNE and $\fix T^k_{\lambda_k\sigma_k}=\fix T_k =\bigcap_{i\in I_k}C_i$, where we set $C_i := \fix U_i$ and
  \begin{equation}\label{pr:rho}
    \rho_k:=\frac{1}{\lambda_k m_k} + \frac{1-\lambda_k}{\lambda_k}
    \geq \rho >0.
  \end{equation}
  Hence, for each $k=0,1,2,\ldots,$ and $z\in C$, we have
	\begin{equation}\label{eq:fejermonotone}
	 \|x^{k+1}-z\|^2=\|T^k_{\lambda_k\sigma_k} x^k-z\|^2
   \leq \|x^k-z\|^2-\rho\|T^k_{\lambda_k\sigma_k} x^k-x^k\|^2\leq\|x^k-z\|^2.
	\end{equation}
  Due to the above inequality, we see that the sequence $\{x^k\}_{k=0}^\infty \subseteq B$ is Fej\'{e}r monotone. In particular, since the sequence $\{\|x^k-z\|\}_{k=0}^\infty$ is bounded and decreasing, it is convergent and
  \begin{equation}\label{eq:asymptoticregularity}
    \|T_k x^k-x^k\|
    = \frac 1 {\lambda_k \sigma_k(x^k)} \|T^k_{\lambda_k\sigma_k} x^k-x^k\|
    \leq \frac 1 {\underline\lambda} \|T^k_{\lambda_k\sigma_k} x^k-x^k\|
    = \frac 1 {\underline\lambda}\|x^{k+1}-x^k\|\rightarrow 0,
  \end{equation}
  where the inequality holds because $\lambda_k\sigma_k(x^k)\geq \underline\lambda$.

  \textit{Part (i).} Assume that for each $i\in I$, the operator $U_i$ is weakly regular over $B$. Let $i\in I$ and let $x$ be an arbitrary cluster point of $\{x^k\}_{k=0}^\infty$. By Theorem \ref{th:Fejer} (i), it suffices to show that $x\in \fix U_i$, which by the arbitrariness of $i$, will imply that $x\in C$. The key step in this part of the proof is to apply a variant of \cite[Lemma 3.4]{ReichZalas2016} to a certain subsequence of $\{T_k\}_{k=0}^\infty$ and $\{x^k\}_{k=0}^\infty$. We note here that \cite[Lemma 3.4]{ReichZalas2016} was established under the assumption that each $U_i$ is weakly regular (see ``Opial's demi-closedness principle'' in \cite{ReichZalas2016}). Nonetheless, the result itself holds when restricted to the operators $U_i$ which are weakly regular only over the ball $B$.

  To see this, let $x^{n_k}\rightharpoonup x$. By the assumption that the control $I_k$ is $s$-intermittent, for each $k=0,1,2,\ldots,$ there is $l_k\subseteq\{0,1,\ldots,s-1\}$ such that $i\in I_{n_k+l_k}$. This implies that $\fix T_{n_k+l_k}\subseteq\fix U_i$. Using (\ref{eq:asymptoticregularity}), we deduce that
	\begin{equation}
	  x^{n_k+l_k}\rightharpoonup x \qquad \text{and} \qquad
    \|T_{n_k+l_k}x^{n_k+l_k}-x^{n_k+l_k}\|\rightarrow 0.
	\end{equation}
  By applying \cite[Lemma 3.4]{ReichZalas2016} to $\{x^{n_k+l_k}\}_{k=0}^\infty$ and $\{T_{n_k+l_k}\}_{k=0}^\infty$, we see that $x\in\fix U_i$ which, as explained above, completes this part of the proof.

  \textit{Part (ii).} Assume that the family $\{C_i\mid i\in I\}$ and each operator $U_i$, $i\in I$, are regular over $B$. Let $i\in I$. By Theorem \ref{th:Fejer} (ii), it is enough to show that $\lim_{k\rightarrow\infty} d(x^k,C_i) =0$, which by the regularity of the family of sets will lead to $\lim_{k\rightarrow\infty}d(x^k,C) =0$. Similarly as in (i), we apply a variant of \cite[Lemma 3.5]{ReichZalas2016} which was established for boundedly regular operators (called there ``approximately shrinking''). We emphasize that this result holds too, when restricted to the operators $U_i$ which are regular only over the ball $B$.

  As in the previous part, we deduce that there is a sequence $l_k\subseteq\{0,1,\ldots,s-1\}$ such that $i\in I_{k+l_k}$. Again, by using (\ref{eq:asymptoticregularity}), we have
	\begin{equation}
	  x^{k+l_k}-x^k\rightarrow 0 \qquad \text{and} \qquad
    \|T_{k+l_k}x^{k+l_k}-x^{k+l_k}\|\rightarrow 0.
	\end{equation}
   Applying \cite[Lemma 3.5]{ReichZalas2016} to $\{x^{k+l_k}\}_{k=0}^\infty$ and $\{T_{k+l_k}\}_{k=0}^\infty$, we see that
	\begin{equation}
	\lim_{k\rightarrow\infty}d(x^{k+l_k},C_i) =0.
	\end{equation}
	Using the properties of projections and the triangle inequality, we obtain
	\begin{equation}
    d(x^k,C_i)= \|x^k-P_{C_i}x^k\|\leq\|x^k-P_{C_i}x^{k+l_k}\|
    \leq \|x^k-x^{k+l_k}\|+\|x^{k+l_k}-P_{C_i}x^{k+l_k}\|,
	\end{equation}
	which, by (\ref{eq:asymptoticregularity}), implies that $\lim_{k\rightarrow\infty} d(x^k,C_i) =0$. So this part of the proof is complete.

  \textit{Part (iii).} We divide the remaining part of the proof into three steps.

  \textit{Step 1.} We first show that \eqref{th:main:equiv} holds. Indeed, since $x^k\to x^\infty$, we can use Theorem \ref{th:Fejer} (v), the facts that the family $\{C_i\mid i\in I\}$ is $\kappa$-linearly regular and that each operator $U_i$, $i\in I$, is $\delta$-linearly regular to arrive at
  \begin{equation}
    \|x^k-x^\infty\| \leq 2 d(x^k,C) \leq 2\kappa \max_{i\in I} d(x^k, C_i) \leq \frac{2\kappa}{\delta} \max_{i\in I} \|U_ix^k-x^k\|.
  \end{equation}
  On the other hand, using the facts that $U_i$ is a cutter, $C\subseteq C_i$, $x^\infty\in C$, we get
  \begin{equation}
    \max_{i\in I} \|U_ix^k-x^k\| \leq \max_{i\in I} d(x^k,C_i) \leq d(x^k, C) \leq \|x^k-x^\infty\|,
  \end{equation}
  which yields \eqref{th:main:equiv}.

  \textit{Step 2.}
  We show that the inequality
  \begin{equation}\label{eq:Step1}
    \max_{i\in I}d(x^{ks}, C_i)^2 \leq
    \frac{s}{\omega\rho\delta^2} \cdot
    \max_{l=0,\ldots,s-1}\frac{1}{\theta_{ks+l} }
    \left(\|x^{ks}-z\|^2-\|x^{(k+1)s}-z\|^2\right)
  \end{equation}
  holds for every $k=0,1,2,\ldots$ and every $z\in C$, where $\theta_k:=\min\{\frac 1 {\omega \delta^2},\ \Theta_k\}$. To this end, fix $i\in I$ and $z\in C$. Given an integer $k$, we choose $l_k\in\{0,\ldots,s-1\}$ to be the smallest index so that $i\in I_{ks+l_k}$. By the definition of the metric projection and by the triangle inequality, we have
  \begin{equation}\label{eq:DistNonExp}
    d(x^{ks},C_i)\leq \|x^{ks}-x^{ks+l_k}\| + d(x^{ks+l_k},C_i).
  \end{equation}
  Using Theorem \ref{th:ESA} (i)-(ii) applied to $T_{ks+l_k}$ and $\lambda_{ks+l_k}\sigma_{ks+l_k}$ (see \eqref{pr:th:ESA:s3:1}), and the inequalities $\Theta_k\geq \theta_k$ and $\frac 1 m \geq \rho$, we get
  \begin{equation}\label{}
    d(x^{ks+l_k}, C_i)^2 \leq
    \frac{1}{\omega \rho \delta^2 \theta_{ks+l_k} }
    \left(\|x^{ks+l_k}-z\|^2-\|x^{ks+l_k+1}-z\|^2\right).
  \end{equation}
  Consequently, using the Cauchy-Schwarz inequality in $\mathbb{R}^{l_k}$, we obtain
  \begin{align}\nonumber
    d(x^{ks},C_i)^2
    & \leq \left(\sum_{p=ks+1}^{ks+l_k}\|x^{p}-x^{p-1}\| + d(x^{ks+l_k},C_i)\right)^2
    \\ \nonumber
    & \leq (l_k+1) \left(\sum_{p=ks+1}^{ks+l_k}\|x^{p}-x^{p-1}\|^2 + d(x^{ks+l_k},C_i)^2\right)
    \\
    & \leq s \left(\sum_{p=ks+1}^{ks+l_k}\|x^{p}-x^{p-1}\|^2 + \frac{1}{\omega\rho\delta^2 \theta_{ks+l_k}} \left(\|x^{ks+l_k}-z\|^2-\|x^{ks+l_k+1}-z\|^2\right)\right).
  \end{align}
  Since $T^{p-1}_{\lambda_{p-1} \sigma_{p-1}}$ is $\rho$-SQNE, we get
  \begin{equation}
    \|x^p-x^{p-1}\|^2 \leq \frac 1 \rho (\|x^{p-1}-z\|^2-\|x^p-z\|^2),
  \end{equation}
  and therefore, since $ \delta^2 \omega \theta_{ks+l_k}\leq 1$,
  \begin{align}
    \nonumber
    d(x^{ks},C_i)^2
    & \leq \frac{s}{\omega\rho\delta^2  \theta_{ks+l_k}} \left( \|x^{ks}-z\|^2-\|x^{ks+l_k+1}-z\|^2\right)
    \\
    &  \leq \frac{s}{\omega\rho\delta^2  \theta_{ks+l_k}} \left(\|x^{ks}-z\|^2-\|x^{k(s+1)}-z\|^2\right),
  \end{align}
  where the last inequality is a consequence of the Fej\'{e}r monotonicity of $\{x^k\}_{k=0}^{\infty}$. The above inequality yields \eqref{eq:Step1}.

  \textit{Step 3.} Setting $z=P_{C}x^{ks}$ in~\eqref{eq:Step1} and using the inequality
  \begin{equation}
    \|x^{(k+1)s}-P_{C}x^{(k+1)s}\|\leq  \|x^{(k+1)s}-P_{C}x^{ks}\|,
  \end{equation}
  we deduce that
  \begin{equation}\label{eq:pr:main:s2}
    \max_{i\in I}d(x^{ks}, C_i)^2 \leq
    \frac{s}{\omega\rho\delta^2 \min\limits_{l=0,\ldots,s-1}\theta_{ks+l}}
    \left(d(x^{ks},C)^2-d(x^{(k+1)s},C)^2\right).
  \end{equation}
  Using the linear regularity of $\{C_i\mid i\in I\}$ on $B(P_Cx^0,r)$, we get
  \begin{equation}
    d(x^{ks},C) \leq \kappa \max_{i\in I} d(x^{ks},C_i)
  \end{equation}
  which, when combined with \eqref{eq:pr:main:s2}, leads to
  \begin{equation}\label{eq:pr:main:s2b}
    d(x^{(k+1)s},C)^2 \leq \left(1-\frac{\omega\rho \delta^2}{s \kappa^2}
    \cdot \min_{l=0,\ldots,s-1}\theta_{ts+l} \right) d(x^{ks},C)^2
  \end{equation}
  for all $k=0,1,2,\ldots$. Using the Fej\'{e}r monotonicity of $\{x^k\}_{k=0}^\infty$, Theorem \ref{th:Fejer} (v) and \eqref{eq:pr:main:s2b}, we arrive at
  \begin{align}\label{eq:pr:main:s2c} \nonumber
    \|x^{k}-x^\infty\|
    & \leq \| x^{\lfloor k/s\rfloor s}-x^\infty\|
    \leq 2 d(x^{\lfloor k/s\rfloor s},C)
    \\ \nonumber
    &\leq 2 d (x^0,C) \prod_{t=0}^{ \lfloor k/s\rfloor -1 }
        \sqrt
        {
        1-\frac{\omega\rho\delta^2}{s\kappa^2 } \cdot
        \min_{l=0,\ldots,s-1}\theta_{ts+l}
        }
    \\
    & \leq 2 d (x^0,C)
    \left(
      \sqrt[\leftroot{-1}\uproot{2}\large \displaystyle 2s]
        {
        1-\frac{\omega\rho\delta^2}{s\kappa^2 }
        \min \left\{ \frac 1 {\omega\delta^2},\ \inf_k \Theta_{k} \right\}
        }
    \right)^{\lfloor k/s\rfloor s}
  \end{align}
  which holds for all $k=s-1,s,s+1,\ldots$. Observe that the above inequality holds true also for $k=0,1,2,\ldots,s$. Since $\lfloor k/s\rfloor s \geq k-s+1$, we have established \eqref{th:main:rate}. This completes the proof.
\end{proof}

\begin{corollary}[Extrapolated Simultaneous Cutter Method]\label{th:ESCM}
  Let $U_i\colon\mathcal{H}\to\mathcal{H}$ be a cutter, $i\in I:=\{1,\ldots,M\}$, such that $C:=\bigcap_{i\in I} \fix U_i \neq \emptyset$. Let the sequence $\{x^k\}_{k=0}^{\infty}$ be defined by the following method:
  \begin{equation}\label{th:ESCM:xk}
  x^0\in\mathcal{H} \qquad \text{and} \qquad x^{k+1} := x^k+\lambda_k\sigma_k(x^k)\left(\sum_{i\in I_k} \omega_i U_{i} x^{k}-x^k\right),
  \quad k=0,1,2,\ldots,
  \end{equation}
  where the control set of indices $I_k\subseteq I$ is nonempty,
  $\omega_n^k >0$ satisfy $\sum_{n=1}^{N_k}\omega_n^{k}=1$, $\lambda_k>0 $ and $\sigma_k\colon\mathcal H \to [1,\infty)$.

  Assume that $\omega_n^k\in[\omega,1]$ for some $\omega \in (0,1]$, $\lambda_k \in[\underline \lambda, \overline \lambda]$ for some $\underline{\lambda} \in (0,1]$ and $\overline \lambda \in [1,2)$, $\sigma_k(x)\in [1,\tau_k(x)]$ for each $x\in \mathcal H$, where

  \begin{equation}\label{th:ESCM:tau}
    \tau(x):=
    \begin{cases} \displaystyle
      \frac
      {\sum_{i\in I_k} \omega_i^k \left\|U_ix-x\right\|^2}
      {\left\|\sum_{i\in I_k} \omega_i^k U_i x-x\right\|^{2}}
      & \text{if}\ x\notin \bigcap\limits_{i\in I_k}\fix U_i \\
      \ 1 & \text{otherwise.}
    \end{cases}
  \end{equation}
  Moreover, assume that the control $\{I_k\}_{k=0}^\infty$ is $s$-intermittent for some $s\geq 1$. As in Theorem \ref{th:main}, we set $r:=d(x^0,C)$ and $B:=B(P_Cx^0,r)$. Then the following statements hold:
  \begin{enumerate}[(i)]
  \item If each $U_i$ is weakly regular over $B$, then $\{x^k\}_{k=0}^{\infty}$ converges weakly to some point $x^\infty\in C \cap B$.
  \item If each $U_i$ is regular over $B$ and the family $\{\fix U_i \mid i\in I\}$ is regular over $B$, then the convergence to $x^\infty$ is in norm.
  \item If each $U_i$ is $\delta_i$-linearly regular over $B$ and the family $\{\fix U_i\mid i \in I\}$ is $\kappa$-linearly regular over $B$, then, in addition to \eqref{th:main:equiv}, we have
      \begin{equation}\label{th:ESCM:tau}
        \|x^k - x^\infty\|
        \leq 2d(x^0,C)
        \left(
          \sqrt[\leftroot{-1}\uproot{2}\large \displaystyle 2s]
          {1-\frac{\omega\rho\delta^2}{s\kappa^2}
          \min\left\{ \frac 1 {\omega\delta^2},\ \inf_k  \lambda_k(2-\lambda_k)\sigma_k(x^k) \right\} }
        \right)^{k-s+1}
      \end{equation}
      for all $k=0,1,2,\ldots$, where $\delta:=\min_i\delta_i$ and $\rho:=\frac{2-\overline{\lambda}}{\overline{\lambda}}$.
  \end{enumerate}
\end{corollary}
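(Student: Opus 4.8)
The plan is to derive Corollary \ref{th:ESCM} from Theorem \ref{th:main} by observing that the extrapolated simultaneous cutter method \eqref{th:ESCM:xk} is precisely the dynamic string averaging method \eqref{th:main:xk} in which every string is a singleton. Concretely, I would take $N_k=|I_k|$ and let the strings $J_n^k$ enumerate the singletons $\{i\}$, $i\in I_k$, with weights $\omega_i$; then $Q_n^k=U_i$, $T_k=\sum_{i\in I_k}\omega_iU_i$, every $m_n^k=1$, and $m=1$. A short check shows that for $m=1$ the functional $\tau_k$ of \eqref{th:main:tau} collapses exactly to the simultaneous $\tau$ of the corollary: the two summands $\|Q_n^kx-x\|^2$ and $\|Q_{n,1}^kx-Q_{n,0}^kx\|^2$ coincide (both equal $\|U_ix-x\|^2$) and the prefactor $\tfrac{m_k}{m_k+1}=\tfrac12$ cancels the resulting factor $2$. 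The parameter ranges also match, since $1+\tfrac1m=2$ and the constant of \eqref{pr:rho} becomes $\rho=\tfrac1{m\overline\lambda}+\tfrac{1-\overline\lambda}{\overline\lambda}=\tfrac{2-\overline\lambda}{\overline\lambda}$. With this identification, parts (i) and (ii) follow verbatim from Theorem \ref{th:main}(i)--(ii), and Theorem \ref{th:main}(iii) already delivers $R$-linear convergence together with the equivalence \eqref{th:main:equiv}.

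The only genuine task left is to sharpen the rate constant, replacing the generic factor $\Theta_k$ of \eqref{th:main:parameters} by $\lambda_k(2-\lambda_k)\sigma_k(x^k)$ (recall the remark preceding Corollary \ref{th:ESC}, where the a priori bound $\Theta\ge\tfrac12\lambda(2-\lambda)$ is upgraded to $\lambda(2-\lambda)\sigma$). For this I would rerun Step~2 of the proof of Theorem \ref{th:main}, but with the generic estimate \eqref{pr:th:ESA:s3:1} replaced by the sharper simultaneous-cutter estimate \eqref{pr:th:ESC:2} of Corollary \ref{th:ESC}, namely
\[
  \lambda_k(2-\lambda_k)\,\sigma_k(x)\,\omega\delta^2\max_{i\in I}d(x,C_i)^2
  \leq \|x-z\|^2-\|T^k_{\lambda_k\sigma_k}x-z\|^2 .
\]
Accordingly I would set $\theta_k:=\min\{\tfrac1{\omega\delta^2},\,\lambda_k(2-\lambda_k)\sigma_k(x^k)\}$ and keep $\rho=\tfrac{2-\overline\lambda}{\overline\lambda}$, noting that by \eqref{pr:rho} with $m_k=1$ each $T^k_{\lambda_k\sigma_k}$ is $\tfrac{2-\lambda_k}{\lambda_k}$-SQNE and hence $\rho$-SQNE (as $\lambda_k\le\overline\lambda$); this is exactly what is needed both for Fej\'er monotonicity and for the telescoping bound $\|x^p-x^{p-1}\|^2\le\tfrac1\rho(\|x^{p-1}-z\|^2-\|x^p-z\|^2)$ used in Step~2.

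The remaining bookkeeping is routine. Since $\overline\lambda\ge1$ we have $\rho=\tfrac{2-\overline\lambda}{\overline\lambda}\le1$, so the structural inequality ``$\tfrac1m\ge\rho$'' invoked in Theorem \ref{th:main} reads simply $1\ge\rho$ and still holds; thus the chain of estimates leading to \eqref{eq:Step1} reproduces, with the new $\theta_k$,
\[
  d(x^{ks+l_k},C_i)^2\le\frac{1}{\omega\rho\delta^2\,\theta_{ks+l_k}}\bigl(\|x^{ks+l_k}-z\|^2-\|x^{ks+l_k+1}-z\|^2\bigr),
\]
and Step~3 then goes through unchanged to give \eqref{eq:pr:main:s2b} and the claimed rate, with $\min\{\tfrac1{\omega\delta^2},\inf_k\lambda_k(2-\lambda_k)\sigma_k(x^k)\}$ in place of $\min\{\tfrac1{\omega\delta^2},\inf_k\Theta_k\}$. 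The point demanding the most care—and the one I would verify first—is the constant tracking in this single substitution, since it is precisely here that the improvement over the generic Theorem \ref{th:main} is realized; everything else is an immediate appeal to Theorem \ref{th:main} and Corollary \ref{th:ESC}.
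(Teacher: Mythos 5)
Your proposal is correct and follows essentially the same route as the paper: the authors prove Corollary \ref{th:ESCM} by repeating the proof of Theorem \ref{th:main} verbatim, with Corollary \ref{th:ESC} substituted for Theorem \ref{th:ESA} precisely so as to redefine $\theta_k:=\min\{\tfrac{1}{\omega\delta^2},\ \lambda_k(2-\lambda_k)\sigma_k(x^k)\}$. Your additional verifications (the collapse of $\tau_k$ for $m=1$, the identity $\rho=\tfrac{2-\overline\lambda}{\overline\lambda}$, and the check that $\tfrac1m\geq\rho$ still holds) are exactly the bookkeeping the paper leaves implicit.
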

\begin{proof}
  The proof is the same as the proof of Theorem \ref{th:main}, where instead of Theorem \ref{th:ESA} we use Corollary \ref{th:ESC} in order to define $\theta_k:=\min\{\frac{1}{\omega\delta^2},\ \lambda_k(2-\lambda_k)\sigma_k(x^k)\}$.
\end{proof}

\begin{corollary}  [Extrapolated Cyclic Cutter Method] \label{th:ECCM}
  Let $U_i\colon\mathcal{H}\to\mathcal{H}$ be a cutter, $i\in I:=\{1,\ldots,M\}$, such that $C:=\bigcap_{i\in I} \fix U_i \neq \emptyset$. Let the sequence $\{x^k\}_{k=0}^{\infty}$ be defined by the following method:
  \begin{equation}\label{th:ECCM:xk}
  x^0\in\mathcal{H} \qquad \text{and} \qquad x^{k+1} := x^k+\lambda_k\sigma_k(x^k)\left(U_M\ldots U_1 x^{k}-x^k\right),
  \quad k=0,1,2,\ldots,
  \end{equation}
  where $\lambda_k>0 $ and $\sigma_k\colon\mathcal H \to [1,\infty)$.

  Assume that $\lambda_k \in[\underline \lambda, \overline \lambda]$ for some $\underline{\lambda} \in (0,1]$ and $\overline \lambda \in [1,1+\frac 1 M)$ and $\sigma_k(x)\in [1,\tau_k(x)]$ for each $x\in \mathcal H$, where
  \begin{equation}\label{th:ECCM:tau}
    \tau(x):=
    \begin{cases} \displaystyle
      \frac{M}{M+1}\left( 1+
      \frac
      {\sum_{i=1}^M \left\|U_i\ldots U_1 x-U_{i-1}\ldots U_1 x\right\|^2}
      {\left\|U_M\ldots U_1 x-x\right\|^{2}}
      \right)
      & \text{if}\ x\notin C\\
      \ 1 & \text{otherwise.}
    \end{cases}
  \end{equation}
  As in Theorem \ref{th:main}, we set $r:=d(x^0,C)$ and $B:=B(P_Cx^0,r)$.
  Then the following statements hold:
  \begin{enumerate}[(i)]
  \item If each $U_i$ is weakly regular over $B$, then $\{x^k\}_{k=0}^{\infty}$ converges weakly to some point $x^\infty\in C \cap B$.
  \item If each $U_i$ is regular over $B$ and the family $\{\fix U_i \mid i\in I\}$ is regular over $B$, then the convergence to $x^\infty$ is in norm.
  \item If each $U_i$ is $\delta_i$-linearly regular over $B$ and the family $\{\fix U_i\mid i \in I\}$ is $\kappa$-linearly regular over $B$, then, in addition to \eqref{th:main:equiv}, we have
      \begin{equation}\label{th:ECCM:tau}
        \|x^k - x^\infty\|
        \leq 2d(x^0,C)
        \left(
          \sqrt
          {1-\frac{\rho\delta^2}{M\kappa^2}
          \min\left\{ \frac 1 {\delta^2},\ \inf_k  \Theta_k \right\} }
        \right)^{k}
      \end{equation}

      for all $k=0,1,2,\ldots$, where $\delta:= \min_{i\in I}\delta_i$, $\rho:=\frac 1 {M\overline{\lambda }} + \frac{1-\overline{\lambda}}{\overline{\lambda }}$ and
      \begin{equation}\label{th:ECCM:Theta}
        \Theta_k := \lambda_k\sigma_k(x^k)
          \frac{1+M-M\lambda_k\frac{\sigma_k(x^k)}{\tau(x^k)}}
             {1+M-\frac{M}{\tau(x^k)}}.
      \end{equation}
  \end{enumerate}
\end{corollary}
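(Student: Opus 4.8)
The plan is to recognize the cyclic cutter as a one-string instance of the dynamic string averaging method and then to specialize Theorem \ref{th:main}. Taking the single string $J=(1,\ldots,M)$ gives $N_k=1$ and $\omega_1^k=1$, so that $\omega=1$ and $m=M$; moreover $T_k=U_M\ldots U_1$ coincides with the operator in \eqref{th:ECCM:xk}, and the functional $\tau$ in \eqref{th:ECCM:tau} is exactly the specialization of \eqref{th:main:tau}. Since the single block $I_k=I$ already exhausts the index set, the control $\{I_k\}$ is trivially $s$-intermittent with $s=1$. With these identifications, parts (i) and (ii) follow at once from the corresponding parts of Theorem \ref{th:main}: weak regularity of each $U_i$ over $B$ yields weak convergence to some $x^\infty\in C\cap B$, and strengthening this to regularity of both the operators and the family $\{\fix U_i\mid i\in I\}$ over $B$ gives norm convergence. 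The double estimate \eqref{th:main:equiv} also transfers verbatim, as its proof (Step~1 of Theorem \ref{th:main}) uses only the cutter property, the $\delta$-linear regularity of each $U_i$, the $\kappa$-linear regularity of the family, and Theorem \ref{th:Fejer}(v).

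For the rate in part (iii) I would rerun Steps~2--3 of the proof of Theorem \ref{th:main}, replacing the operator-level estimates of Theorem \ref{th:ESA} by their sharper cyclic counterparts from Corollary \ref{th:ECC}. Concretely, \eqref{eq:ECC:ineq1} and \eqref{eq:ECC:ineq2} combine, for any $z\in C$, into
\begin{equation*}
  \Theta_k\,\frac{\delta^2}{M}\max_{i\in I}d(x^k,C_i)^2
  \le \|x^k-z\|^2-\|x^{k+1}-z\|^2,
\end{equation*}
which is precisely inequality \eqref{pr:th:ESA:s3:1} with $\omega=1$ and $m=M$. Because $s=1$, the offset $l_k$ may be taken to be $0$, so the intermediate telescoping terms $\|x^p-x^{p-1}\|^2$ never appear and this single inequality already controls one full step of the iteration.

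It then remains to set $z=P_Cx^k$, to use $\|x^{k+1}-P_Cx^{k+1}\|\le\|x^{k+1}-P_Cx^k\|$ together with the $\kappa$-linear regularity $d(x^k,C)\le\kappa\max_{i\in I}d(x^k,C_i)$, and to cap the extrapolation by $\theta_k:=\min\{\tfrac1{\delta^2},\Theta_k\}$. Since $\rho\le 1$ (indeed $\rho\le 1/M$ for $\overline{\lambda}\in[1,1+\tfrac1M)$) and $\theta_k\le\Theta_k$, one has $\Theta_k\ge\rho\theta_k$, which turns the displayed inequality into the one-step contraction
\begin{equation*}
  d(x^{k+1},C)^2\le\Bigl(1-\frac{\rho\delta^2}{M\kappa^2}\,\theta_k\Bigr)d(x^k,C)^2 .
\end{equation*}
Telescoping this bound over $k$ and invoking Fej\'er monotonicity together with Theorem \ref{th:Fejer}(v) then yields the claimed rate \eqref{th:ECCM:tau}.

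I do not expect a genuine obstacle: the argument is a faithful specialization of Theorem \ref{th:main}, and the sole point requiring care is the bookkeeping of constants. One must track how $\omega=1$, $m=M$ and $s=1$ enter, and observe that \emph{retaining} the factor $1/M$ coming from the product estimate \eqref{eq:ECC:ineq2}, rather than absorbing it into $\rho$ via $1/m\ge\rho$ as in the proof of Theorem \ref{th:main}, is what produces the (slightly conservative) constant $\rho\delta^2/(M\kappa^2)$ recorded in the statement.
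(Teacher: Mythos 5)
Your proposal is correct and matches the paper's (implicit) proof: the paper gives no separate argument for this corollary, and the intended route is exactly yours, namely the one-string specialization of Theorem \ref{th:main} with $N_k=1$, $\omega=1$, $m=M$ and $s=1$, with the operator-level estimates supplied by Corollary \ref{th:ECC}. Your closing bookkeeping remark is also accurate and worth emphasizing: a verbatim substitution into \eqref{th:main:rate} would in fact yield the sharper factor $\rho\delta^2/\kappa^2$ in place of $\rho\delta^2/(M\kappa^2)$, because the proof of Theorem \ref{th:main} absorbs the $1/m$ coming from \eqref{eq:ECC:ineq2} into $\rho$ via $m\le 1/\rho$, whereas your variant of retaining the $1/M$ and instead using $\Theta_k\ge\rho\theta_k$ (valid since $\rho\le 1/M\le 1$) reproduces exactly the more conservative constant recorded in the statement.
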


\begin{remark}\label{rem:lambda}
  Allowing $\overline \lambda \in [1,1+\frac 1 m )$, enables us to capture examples presented in the introduction within the framework of Theorem \ref{th:main}. Indeed, let us consider, for example,  the iterative scheme \eqref{th:main:xk} with $\lambda_k>0$ and $\sigma_k(x^k) >0$. Following \cite{NikazadMirzapour2017}, assume that $\sigma_k$ is defined as in \eqref{intro:sigma:SA2} and $\lambda_k\in [\varepsilon,2-\varepsilon]$ for some $\varepsilon\in (0,1)$. Then we have
  \begin{equation}\label{}
    \lambda_k \sigma_k(x^k)=\frac {m+1} {2m} \lambda_k \tau_k(x^k),
  \end{equation}
  where $\tau_k$ is defined in \eqref{th:main:tau}. It suffices to use $\lambda_k':=\frac {m+1} {2m} \lambda_k$ in Theorem \ref{th:main} and observe that $\lambda_k'\leq 1+\frac 1 m -\frac \varepsilon 2$. Similar reasoning can be repeated for the extrapolated cyclic cutter with $\sigma_k$ defined in \eqref{intro:sigma:CC}; see also Corollary \ref{th:ECCM} below. Thus we recover the framework presented in \cite{CegielskiCensor2012}.
\end{remark}

\begin{remark}\label{ex:ZNLY17}
  We now turn our attention to the results from \cite{ZhaoNgLiYao2017} which correspond to Corollary \ref{th:ESCM}. It should be mentioned here that the results from \cite{ZhaoNgLiYao2017} are presented for a more general control, which we reduce here to the $s$-intermittent one, as in the setting of this paper. Moreover, due to the nature of the algorithmic operators in \cite{ZhaoNgLiYao2017}, which are projections onto certain closed and convex supersets $C_i(x^k)\supseteq C_i$ whenever $x^k\notin C_i$, we identify these projections with cutters $U_i$; see the introduction.
  Following \cite{ZhaoNgLiYao2017}, we consider \eqref{th:ESCM:xk} with the iterative step equivalently rewritten as
  \begin{equation}\label{ex:ZNLY17:xk}
   x^{k+1} := x^k+\alpha_k\left(\sum_{i\in I_k} \omega_i U_{i} x^{k}-x^k\right),
  \quad k=0,1,2,\ldots,
  \end{equation}
  where $\alpha_k\in [\varepsilon,\ (2-\varepsilon)\tau_k(x^k)]$ for some $\varepsilon \in (0,1)$. In this case we have $\alpha_k=\lambda_k\tau_k$, $\underline{\lambda}=\varepsilon$ and $\overline{\lambda}=2-\varepsilon$. The following inequality plays an important role in the proof of \cite[Theorem 4.1]{ZhaoNgLiYao2017}:
  \begin{equation}\label{}
    \omega_n^k\alpha_k \left(2-\frac{\alpha_k}{\tau_k(x^k)}\right)=
    \omega_n^k\lambda_k\tau_k(x^k) (2-\lambda_k)
    \geq \Delta.
  \end{equation}
   It is assumed to hold for some $\Delta >0$ and a subsequence $\{x^{m_k}\}$ with $m_{k+1}-m_k\leq 2s-1$. In our case one can simply use $\Delta=\omega \underline{\lambda} (2-\overline{\lambda})=\omega\varepsilon^2$. The estimate for the linear convergence rate that follows from the proof of \cite[Theorem 4.1]{ZhaoNgLiYao2017} is
  \begin{equation}\label{}
    \|x^k - x^\infty\| \leq 2d(x^0,C)
    \left(
      \sqrt[\leftroot{-1}\uproot{2}\large \displaystyle 2s]
        {
        1-\frac{\Delta\delta^2}{2\kappa^2(1+s\Delta \delta^2 \frac{2-\varepsilon}{\varepsilon})}
        }
    \right)^{k-s+1}.
  \end{equation}

\end{remark}

\begin{remark}
  Corollary \ref{th:ECC} (i) recovers \cite[Theorem 9]{CegielskiCensor2012} and partially recovers \cite[Theorem 3.1]{NikazadMirzapour2015}, which was established for $\alpha_i$-relaxed cutters, $\alpha_i\in (0,2)$. Theorem \ref{th:main} (i) partially recovers \cite[Theorem 3.7]{NikazadMirzapour2017}, which was also established for relaxed cutters.
\end{remark}

\begin{remark}[Non-extrapolated projection methods]
In the case of the non-extrapolated string averaging projection method, with $\lambda_k=1$, our estimate \eqref{th:main:rate} for the linear rate of convergence can be written as $\|x^k-x^\infty\|\leq c q^k$, where
$q=
  \sqrt[\leftroot{-1}\uproot{2}\large \displaystyle 2s]
    {
    1-\frac{\omega}{ms\kappa^2}
    }.$
This improves \cite[Theorem 9]{BargetzReichZalas2017}, where
$
  q=
  \sqrt[\leftroot{-1}\uproot{2}\large \displaystyle 2s]
    {
    1-\frac{\omega}{2ms\kappa^2}
    }.
$
Moreover, our estimate coincides with those formulated for the cyclic and simultaneous projection methods presented in \cite[Table 1]{BargetzReichZalas2017}.
\end{remark}

\section{Numerical Simulations}

In this section we present results of numerical simulations performed on 20 systems of linear equations $Ax=b$, which we obtain by applying the Radon transform to the images presented in Figure \ref{fig:allProblems}. For the source of the test images we have used the TESTIMAGES repository\footnote{See \url{https://testimages.org} for TESTIMAGES repository.} recommended in \cite{AsuniGiachetti2013, AsuniGiachetti2014}. Every image was downgraded to a $32\times 32$ pixels size.

For every image $X \in \mathbb R^{32\times 32}$, we used the \texttt{radon}\footnote{See \url{www.mathworks.com/help/images/ref/radon} for detailed description of the MATLAB \texttt{radon} function.} function from the MATLAB Image Processing Toolbox. We chose 20 equally distributed angles starting from 0 to 180 degrees. This produces a vector $b \in \mathbb R^{835}$. We recover the matrix $A$ by applying the same \texttt{radon} function to standard basis vectors (the matrices $E_{ij}$ in this case) in the space of images $\mathbb R^{32\times 32}$. The size of the matrix is $835\times 1024$. The solution $x \in \mathbb R^{1024}$ of the linear system $Ax=b$, after a suitable rearrangement, should reproduce the image $X$.

Since each of the constraints is a hyperplane defined by $C_i:=\{x\in \mathbb R^M \mid \langle a_i, x\rangle  = b_i\}$, where $M=835$, we use metric projections $U_i:=P_{C_i}$. We recall that
\begin{equation}\label{}
  P_{C_i}x=x-\frac{\langle a_i, x\rangle-b_i}{\|a_i\|^2}a_i.
\end{equation}
We consider three methods with iterative steps of the form $x^{k+1}:=Tx^k$ and their extrapolated variants, where $x^{k+1}:=x^k+\sigma(x^k)(Tx^k-x^k)$. The extrapolation formulae are based on \eqref{intro:sigma:SC}--\eqref{intro:sigma:SA2}. The detailed description is as follows:
\begin{itemize}
  \item Simultaneous Projection Method (PM), where $\omega_i=1/M$,
  \begin{equation}\label{numerics:SimPM}
    T x:= \frac 1 M \sum_{i=1}^{M} P_{C_i}x \quad \text{and}\quad
    \sigma_1(x):=\frac{\frac 1 M \sum_{i=1}^M \|P_{C_i} x-x\|^2}{\|Tx-x\|^2}.
  \end{equation}
  \item Cyclic PM, where
  \begin{equation}\label{numerics:CycPM}
    T x:=P_{C_M}\ldots P_{C_1} x \quad \text{and}\quad
    \sigma_2(x):= \frac 1 2 +
    \frac{\sum_{i=1}^{M} \left\|P_{C_i}\ldots P_{C_1} x-P_{C_{i-1}}\ldots P_{C_1} x \right\|^2}
    {2\left\|T x-x\right\|^{2}}.
  \end{equation}
  \item String Averaging PM, with $\omega_n=1/N$,
  \begin{equation}\label{numerics:SAPM}
    T x:= \frac 1 N \sum_{n=1}^{N} Q_n, \quad \text{and} \quad Q_n x=\prod_{j\in J_n} P_{C_j}x,
  \end{equation}
  where we divide the set $\{1,2,\ldots,M\}$ into $N=84$ consecutive strings $J_n$ of length $m=10$. We visualize this process as follows:
  \begin{equation}
    \underbrace{(1,2,\ldots,10)}_{J_1},\ \underbrace{(11,12,\ldots,20)}_{J_2},\ \ldots\ ,
    \underbrace{(827, 828, \ldots,835)}_{J_{84}}.
  \end{equation}
  The length of the last five strings $J_{831},\ldots, J_{835}$ is $m-1$. Following \eqref{intro:sigma:SA1} and \eqref{intro:sigma:SA2}, we consider the extrapolation formulae
  \begin{equation}\label{numerics:sigma3}
    \sigma_3(x):=\frac{\frac 1 N \sum_{n=1}^{N}\|Q_n x-x\|^2}{\|T x-x\|^2}
  \end{equation}
  and
  \begin{equation}\label{numerics:sigma4}
  \sigma_4(x):=
  \frac
  {
    \frac 1 N \sum\limits_{n=1}^{N}
    \left(
      \left\|Q_nx-x \right\|^2
        + \sum\limits_{l=1}^{|J_n|}\|Q_{n,l}x-Q_{n,l-1}x\|^2
      \right)
  }
  {2 \|T x-x\|^2}.
\end{equation}
\end{itemize}

We apply each of the above methods to all of the 20 test problems. In each case we measure the quantity
\begin{equation}\label{numerics:measurement}
  \log_{10}\left( \frac{\max_i d(x^k,C_i)}{\max_i d(x^0,C_i)}\right)
\end{equation}
which, after averaging, we show in Figure \ref{fig:error}. In Figure \ref{fig:problem10} we present the quality of the reproduced image for problem number 10 after 500 iterations.

Using Figures \ref{fig:error} and \ref{fig:problem10}, we formulate a few observations related to all of the considered methods:
\begin{enumerate}[a)]
  \item Among all the considered methods the simultaneous PM is the slowest one whereas the cyclic PM is the fastest one. String averaging PMs lie somewhere in between.
  \item The $\sigma_1$-extrapolation applied to the simultaneous PM significantly improves the convergence properties.
  \item Both extrapolation techniques, $\sigma_3$ and $\sigma_4$, improve the convergence of the basic string averaging PM. In the considered case $\sigma_4$ has better convergence properties than $\sigma_3$.
  \item The $\sigma_2$-extrapolation does not lead to acceleration of the cyclic projection method. Nevertheless, it keeps the convergence speed.
  \item The solutions obtained for problem 10 (Figure \ref{fig:problem10}) reproduce the original image after 500 iterations although the quality for the simultaneous PM is much lower than for other methods.
\end{enumerate}

\begin{figure}[tbh]
\centering
\includegraphics[scale=0.7] {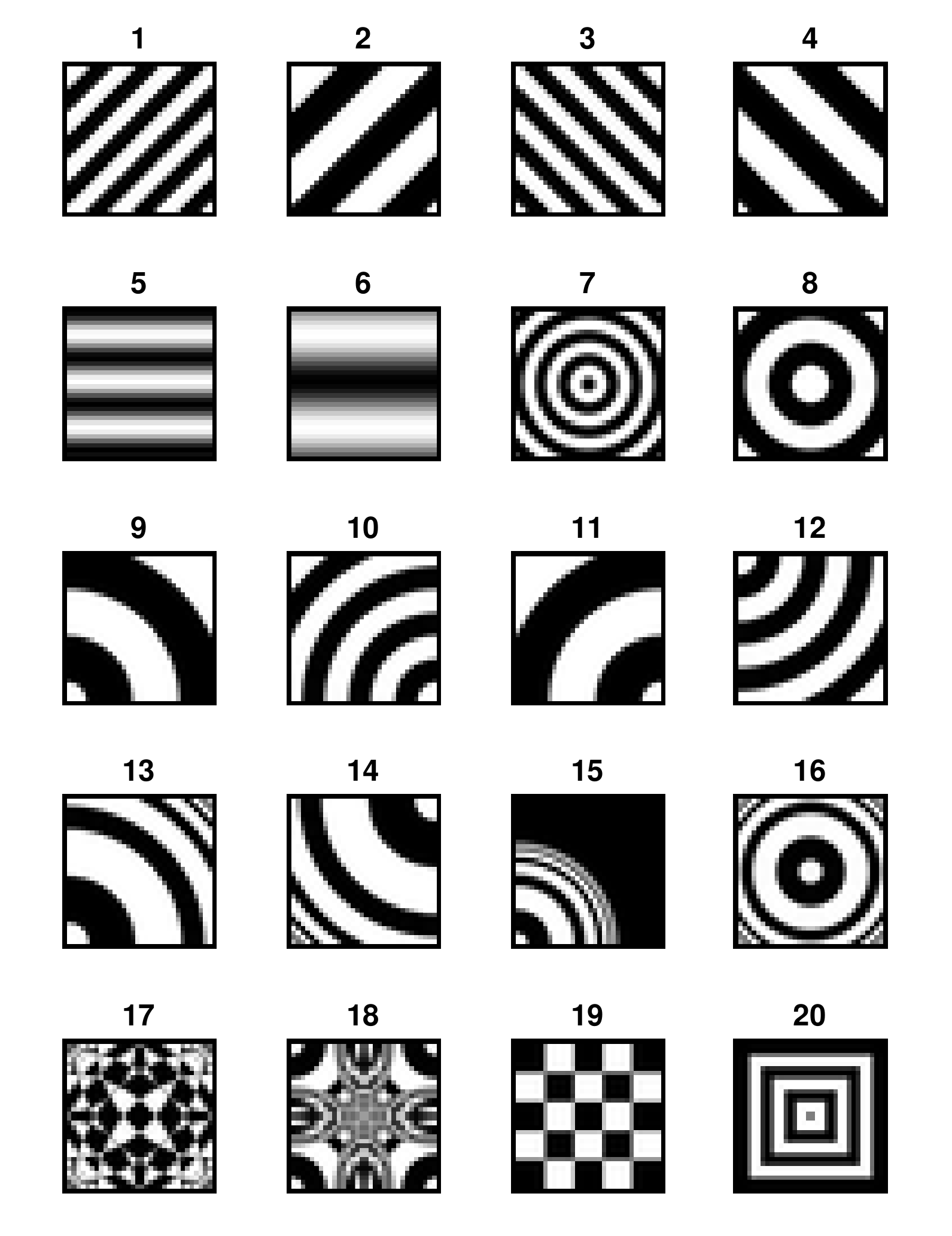}
\caption{Images that were used for numerical simulations.}
\label{fig:allProblems}
\end{figure}

\begin{figure}[h]
\centering
\includegraphics[scale=0.5] {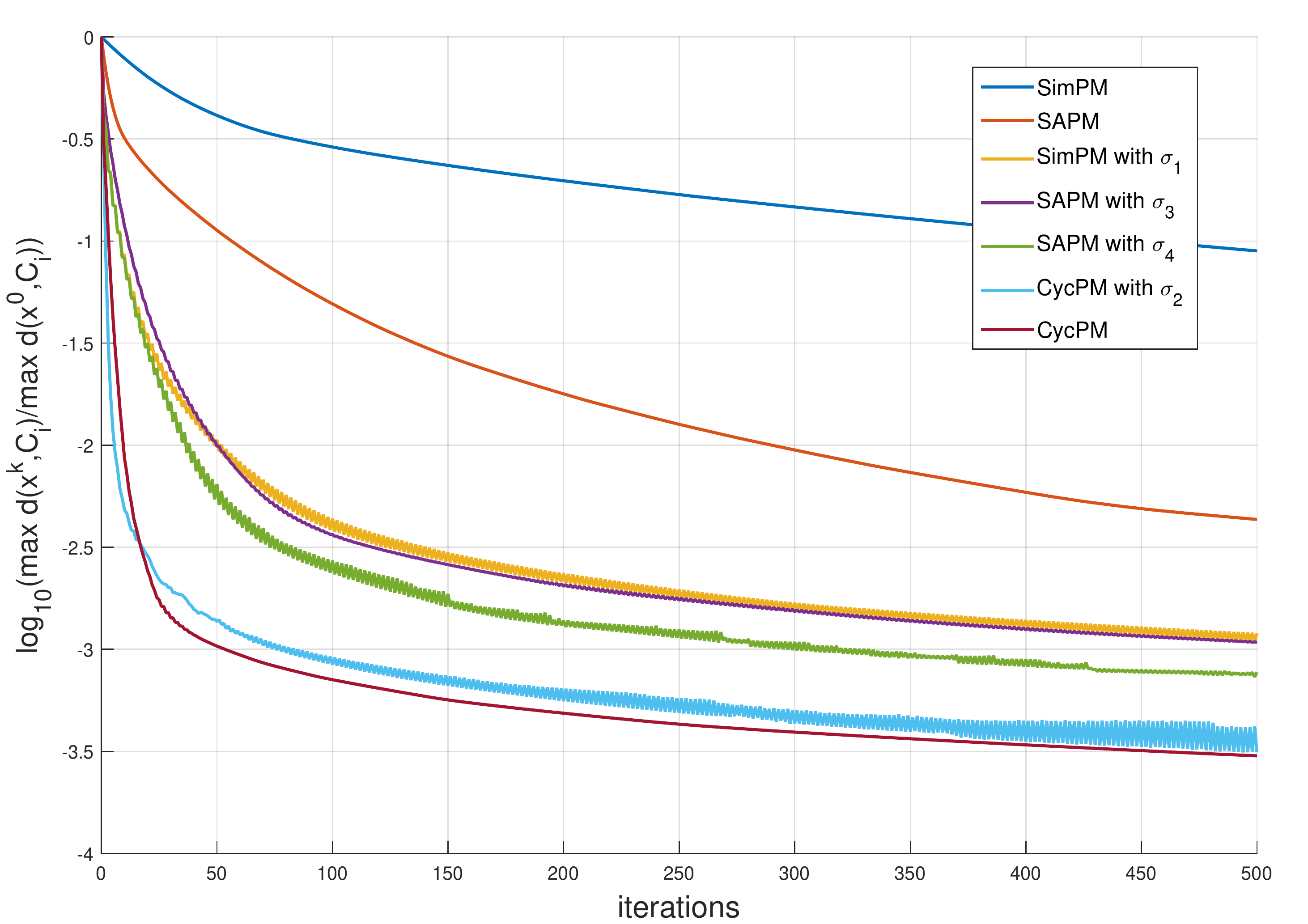}
\caption{Numerical behavior for simultaneous, cyclic and string averaging projection methods described in \eqref{numerics:SimPM}--\eqref{numerics:sigma4}. We recall that for SAPM we consider $N=84$ strings, most of them of length $m=10$. Each curve represents the average of the quantity \eqref{numerics:measurement} obtained for all 20 test problems presented in Figure \ref{fig:allProblems}.}
\label{fig:error}
\end{figure}

\begin{figure}[h]
\centering
\includegraphics[scale=0.4] {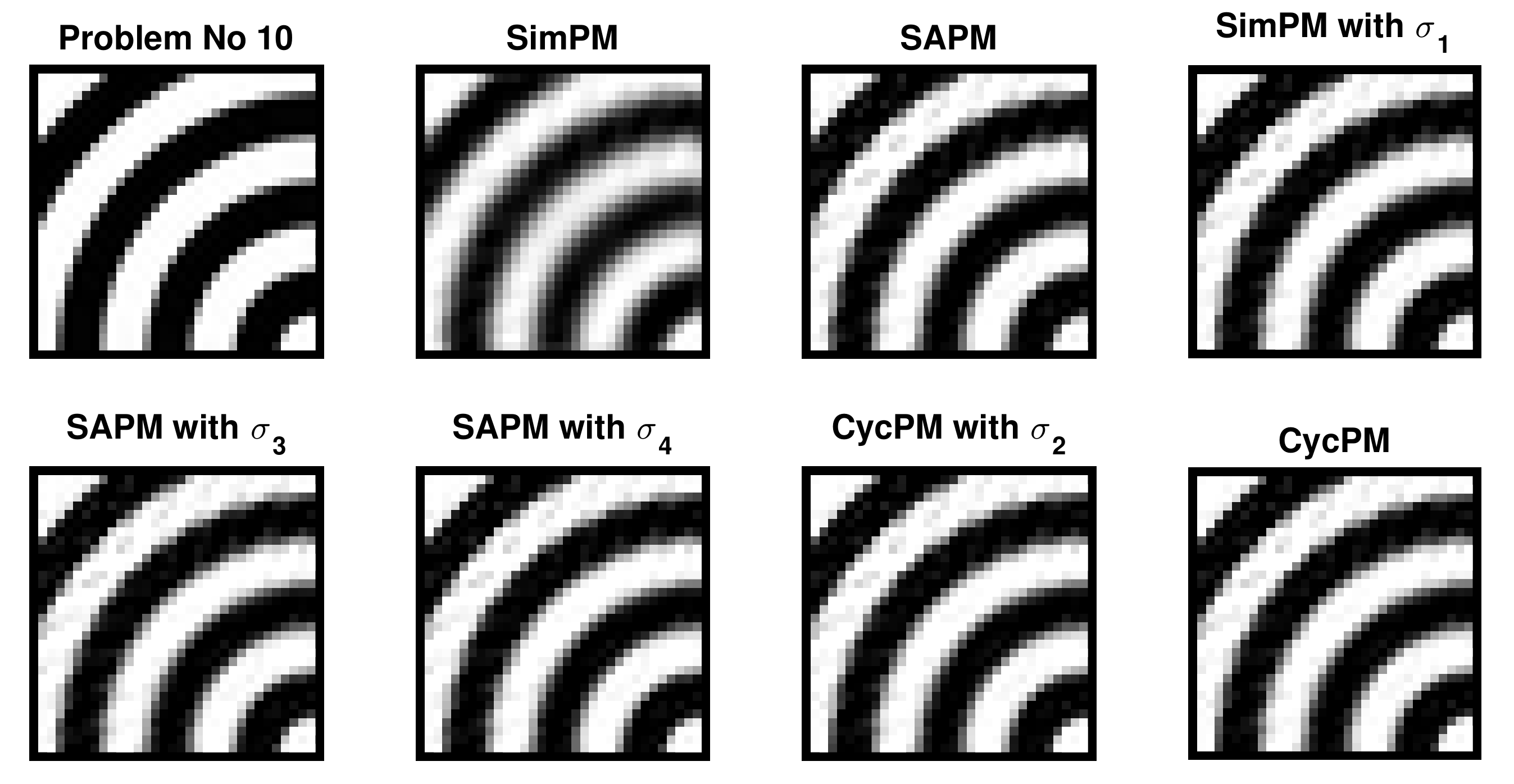}
\caption{Solutions obtained for problem number 10 after 500 iterations of simultaneous, cyclic and string averaging projection methods described in \eqref{numerics:SimPM}--\eqref{numerics:sigma4}.}
\label{fig:problem10}
\end{figure}

{\bf Acknowledgments.} This research was supported in part by the Israel Science Foundation (Grants no. 389/12 and 820/17), by the Fund for the Promotion of Research at the Technion and by the Technion General Research Fund. The first and the fourth authors were partially supported by the Austria--Israel Academic Network in Innsbruck (AIANI).

\bibliographystyle{siamplain}
\bibliography{references}

\begin{thebibliography}{10}

\bibitem{AleynerReich2008}
{\sc A.~Aleyner and S.~Reich}, {\em Block-iterative algorithms for solving
  convex feasibility problems in {H}ilbert and in {B}anach spaces}, J. Math.
  Anal. Appl., 343 (2008), pp.~427--435.

\bibitem{AoyamaKohsaka2014}
{\sc K.~Aoyama and F.~Kohsaka}, {\em Viscosity approximation process for a
  sequence of quasinonexpansive mappings}, Fixed Point Theory Appl., 2014:17
  (2014).

\bibitem{AsuniGiachetti2013}
{\sc N.~Asuni and A.~Giachetti}, {\em Testimages: A large data archive for
  display and algorithm testing}, Journal of Graphics Tools, 17 (2013),
  pp.~113--125.

\bibitem{AsuniGiachetti2014}
{\sc N.~Asuni and A.~Giachetti}, {\em Testimages: a large-scale archive for
  testing visual devices and basic image processing algorithms}, in Smart Tools
  and Apps for Graphics - Eurographics Italian Chapter Conference,
  A.~Giachetti, ed., The Eurographics Association, 2014,
  \href{http://dx.doi.org/10.2312/stag.20141242} {doi:10.2312/stag.20141242}.

\bibitem{BargetzReichZalas2017}
{\sc C.~Bargetz, S.~Reich, and R.~Zalas}, {\em Convergence properties of
  dynamic string-averaging projection methods in the presence of
  perturbations}, Numer. Algorithms, 77 (2018), pp.~185--209.

\bibitem{BauschkeBorwein1996}
{\sc H.~H. Bauschke and J.~M. Borwein}, {\em On projection algorithms for
  solving convex feasibility problems}, SIAM Rev., 38 (1996), pp.~367--426.

\bibitem{BauschkeCombettes2011}
{\sc H.~H. Bauschke and P.~L. Combettes}, {\em Convex analysis and monotone
  operator theory in {H}ilbert spaces}, CMS Books in Mathematics/Ouvrages de
  Math\'ematiques de la SMC, Springer, New York, 2011.
\newblock With a foreword by H\'edy Attouch.

\bibitem{BauschkeCombettesKruk2006}
{\sc H.~H. Bauschke, P.~L. Combettes, and S.~G. Kruk}, {\em Extrapolation
  algorithm for affine-convex feasibility problems}, Numer. Algorithms, 41
  (2006), pp.~239--274.

\bibitem{BauschkeMatouskovaReich2004}
{\sc H.~H. Bauschke, E.~Matou\v{s}kov\'{a}, and S.~Reich}, {\em Projection and
  proximal point methods: convergence results and counterexamples}, Nonlinear
  Anal., 56 (2004), pp.~715--738.

\bibitem{BauschkeNollPhan2015}
{\sc H.~H. Bauschke, D.~Noll, and H.~M. Phan}, {\em Linear and strong
  convergence of algorithms involving averaged nonexpansive operators}, J.
  Math. Anal. Appl., 421 (2015), pp.~1--20.

\bibitem{BorweinLiTam2017}
{\sc J.~M. Borwein, G.~Li, and M.~K. Tam}, {\em Convergence rate rnalysis for
  averaged fixed point iterations in common fixed point problems}, SIAM J.
  Optim., 27 (2017), pp.~1--33.

\bibitem{BrowderPetryshyn1966}
{\sc F.~E. Browder and W.~V. Petryshyn}, {\em The solution by iteration of
  nonlinear functional equations in {B}anach spaces}, Bull. Amer. Math. Soc.,
  72 (1966), pp.~571--575.

\bibitem{ButnariuDavidiHermanKazantsev2007}
{\sc D.~Butnariu, R.~Davidi, G.~T. Herman, and I.~G. Kazantsev}, {\em Stable
  convergence behavior under summable perturbations of a class of projection
  methods for convex feasibility and optimization problems}, IEEE Journal of
  Selected Topics in Signal Processing, 1 (2007), pp.~540--547.

\bibitem{ButnariuReichZaslavski2008}
{\sc D.~Butnariu, S.~Reich, and A.~J. Zaslavski}, {\em Stable convergence
  theorems for infinite products and powers of nonexpansive mappings}, Numer.
  Funct. Anal. Optim., 29 (2008), pp.~304--323.

\bibitem{Cegielski2012}
{\sc A.~Cegielski}, {\em Iterative methods for fixed point problems in
  {H}ilbert spaces}, vol.~2057 of Lecture Notes in Mathematics, Springer,
  Heidelberg, 2012.

\bibitem{Cegielski2015}
{\sc A.~Cegielski}, {\em Application of quasi-nonexpansive operators to an
  iterative method for variational inequality}, SIAM J. Optim., 25 (2015),
  pp.~2165--2181.

\bibitem{CegielskiCensor2012}
{\sc A.~Cegielski and Y.~Censor}, {\em Extrapolation and local acceleration of
  an iterative process for common fixed point problems}, J. Math. Anal. Appl.,
  394 (2012), pp.~809--818.

\bibitem{CegielskiNimana2018}
{\sc A.~Cegielski and N.~Nimana}, {\em Extrapolated cyclic subgradient
  projection methods for the convex feasibility problems and their numerical
  behavior},  (preprint).

\bibitem{CegielskiReichZalas2018}
{\sc A.~Cegielski, S.~Reich, and R.~Zalas}, {\em Regular {S}equences of
  {Q}uasi-{N}onexpansive {O}perators and {T}heir {A}pplications}, SIAM J.
  Optim., 28 (2018), pp.~1508--1532.

\bibitem{CegielskiZalas2013}
{\sc A.~Cegielski and R.~Zalas}, {\em Methods for variational inequality
  problem over the intersection of fixed point sets of quasi-nonexpansive
  operators}, Numer. Funct. Anal. Optim., 34 (2013), pp.~255--283.

\bibitem{CegielskiZalas2014}
{\sc A.~Cegielski and R.~Zalas}, {\em Properties of a class of approximately
  shrinking operators and their applications}, Fixed Point Theory, 15 (2014),
  pp.~399--426.

\bibitem{CensorElfvingHerman2001}
{\sc Y.~Censor, T.~Elfving, and G.~T. Herman}, {\em Averaging strings of
  sequential iterations for convex feasibility problems}, in Inherently
  parallel algorithms in feasibility and optimization and their applications
  ({H}aifa, 2000), vol.~8 of Stud. Comput. Math., North-Holland, Amsterdam,
  2001, pp.~101--113.

\bibitem{CensorMansour2016}
{\sc Y.~Censor and R.~Mansour}, {\em New {D}ouglas-{R}achford algorithmic
  structures and their convergence analyses}, SIAM J. Optim., 26 (2016),
  pp.~474--487.

\bibitem{CensorSegal2007}
{\sc Y.~Censor and A.~Segal}, {\em Iterative projection methods in biomedical
  inverse problems}, in Mathematical methods in biomedical imaging and
  intensity-modulated radiation therapy ({IMRT}), vol.~7 of CRM Series, Ed.
  Norm., Pisa, 2008, pp.~65--96.

\bibitem{CensorTom2003}
{\sc Y.~Censor and E.~Tom}, {\em Convergence of string-averaging projection
  schemes for inconsistent convex feasibility problems}, Optim. Methods Softw.,
  18 (2003), pp.~543--554.

\bibitem{CensorZaslavski2013}
{\sc Y.~Censor and A.~J. Zaslavski}, {\em Convergence and perturbation
  resilience of dynamic string-averaging projection methods}, Comput. Optim.
  Appl., 54 (2013), pp.~65--76.

\bibitem{Combettes1996}
{\sc P.~L. Combettes}, {\em The convex feasibility problem in image recovery},
  vol.~95 of Advances in Imaging and Electron Physics, Elsevier, 1996,
  pp.~155--270.

\bibitem{Combettes1997}
{\sc P.~L. Combettes}, {\em Hilbertian convex feasibility problem: convergence
  of projection methods}, Appl. Math. Optim., 35 (1997), pp.~311--330.

\bibitem{Crombez2002}
{\sc G.~Crombez}, {\em Finding common fixed points of strict paracontractions
  by averaging strings of sequential iterations}, J. Nonlinear Convex Anal., 3
  (2002), pp.~345--351.

\bibitem{DosSantos1987}
{\sc L.~T. Dos~Santos}, {\em A parallel subgradient projections method for the
  convex feasibility problem}, J. Comput. Appl. Math., 18 (1987), pp.~307--320.

\bibitem{FlamZowe1990}
{\sc S.~D. Fl{\aa}m and J.~Zowe}, {\em Relaxed outer projections, weighted
  averages and convex feasibility}, BIT, 30 (1990), pp.~289--300.

\bibitem{GoebelReich1984}
{\sc K.~Goebel and S.~Reich}, {\em Uniform convexity, hyperbolic geometry, and
  nonexpansive mappings}, vol.~83 of Monographs and Textbooks in Pure and
  Applied Mathematics, Marcel Dekker, Inc., New York, 1984.

\bibitem{KiwielLopuch1997}
{\sc K.~C. Kiwiel and B.~{\L}opuch}, {\em Surrogate projection methods for
  finding fixed points of firmly nonexpansive mappings}, SIAM J. Optim., 7
  (1997), pp.~1084--1102.

\bibitem{KolobovReichZalas2017}
{\sc V.~I. Kolobov, S.~Reich, and R.~Zalas}, {\em Weak, strong, and linear
  convergence of a double-layer fixed point algorithm}, SIAM J. Optim., 27
  (2017), pp.~1431--1458.

\bibitem{LukeThaoTam2018}
{\sc D.~R. Luke, N.~H. Thao, and M.~K. Tam}, {\em Implicit {E}rror {B}ounds for
  {P}icard {I}terations on {H}ilbert {S}paces}, Vietnam J. Math., 46 (2018),
  pp.~243--258.

\bibitem{NikazadAbbasiMirzapour2016}
{\sc T.~Nikazad, M.~Abbasi, and M.~Mirzapour}, {\em Convergence of
  string-averaging method for a class of operators}, Optim. Methods Softw., 31
  (2016), pp.~1189--1208.

\bibitem{NikazadMirzapour2015}
{\sc T.~Nikazad and M.~Mirzapour}, {\em Extrapolation of relaxed cutter
  operators for convex feasibility problem}, in 8th International Conference of
  the Iranian Soceity of Operations Research, Ferdowsi University of Mashhad,
  May 21-22, 2015, 2015, pp.~171--174.

\bibitem{NikazadMirzapour2017}
{\sc T.~Nikazad and M.~Mirzapour}, {\em Generalized relaxation of string
  averaging operators based on strictly relaxed cutter operators}, J. Nonlinear
  Convex Anal., 18 (2017), pp.~431--450.

\bibitem{Opial1967}
{\sc Z.~Opial}, {\em Weak convergence of the sequence of successive
  approximations for nonexpansive mappings}, Bull. Amer. Math. Soc., 73 (1967),
  pp.~591--597.

\bibitem{Outlaw1969}
{\sc C.~L. Outlaw}, {\em Mean value iteration of nonexpansive mappings in a
  {B}anach space}, Pacific J. Math., 30 (1969), pp.~747--750.

\bibitem{PetryshynWilliamson1973}
{\sc W.~V. Petryshyn and T.~E. Williamson, Jr.}, {\em Strong and weak
  convergence of the sequence of successive approximations for
  quasi-nonexpansive mappings}, J. Math. Anal. Appl., 43 (1973), pp.~459--497.

\bibitem{Pierra1984}
{\sc G.~Pierra}, {\em Decomposition through formalization in a product space},
  Math. Programming, 28 (1984), pp.~96--115.

\bibitem{ReichSalinas2017}
{\sc S.~Reich and Z.~Salinas}, {\em Metric convergence of infinite products of
  operators in {H}adamard spaces}, J. Nonlinear Convex Anal., 18 (2017),
  pp.~331--345.

\bibitem{ReichZalas2016}
{\sc S.~Reich and R.~Zalas}, {\em A modular string averaging procedure for
  solving the common fixed point problem for quasi-nonexpansive mappings in
  {H}ilbert space}, Numer. Algorithms, 72 (2016), pp.~297--323.

\bibitem{ReichZaslavski2014}
{\sc S.~Reich and A.~J. Zaslavski}, {\em An example concerning bounded linear
  regularity of subspaces in {H}ilbert space}, Bull. Aust. Math. Soc., 89
  (2014), pp.~217--226.

\bibitem{ReichZaslavski2014b}
{\sc S.~Reich and A.~J. Zaslavski}, {\em Porosity and the bounded linear
  regularity property}, J. Appl. Anal., 20 (2014), pp.~1--6.

\bibitem{Zaslavski2014}
{\sc A.~J. Zaslavski}, {\em Dynamic string-averaging projection methods for
  convex feasibility problems in the presence of computational errors}, J.
  Nonlinear Convex Anal., 15 (2014), pp.~623--636.

\bibitem{Zaslavski2016}
{\sc A.~J. Zaslavski}, {\em Approximate solutions of common fixed-point
  problems}, vol.~112 of Springer Optimization and Its Applications, Springer,
  2016.

\bibitem{ZhaoNgLiYao2017}
{\sc X.~Zhao, K.~F. Ng, C.~Li, and J.-C. Yao}, {\em Linear regularity and
  linear convergence of projection-based methods for solving convex feasibility
  problems}, Applied Mathematics {\&} Optimization,  (2017),
  \href{http://dx.doi.org/10.1007/s00245-017-9417-1}
  {doi:10.1007/s00245-017-9417-1}.

\end{thebibliography}

\end{document}